\crefname{hypothesis}{Hypothesis}{Hypotheses}
\title{Cut finite element discretizations of cell-by-cell EMI electrophysiology models\thanks{Submitted to pre-print server and journal (\today).
\funding{M.~E.~Rognes graciously acknowledges support and funding from the Research Council of Norway (RCN) via FRIPRO grant agreement \#324239 (EMIx)}}}
\author{
  Nanna Berre\thanks{Department of Mathematical Sciences, Norwegian University of Science and Technology, Trondheim, Norway (\email{nanna.berre@ntnu.no})}
  \and Marie E.~Rognes\thanks{Department for Numerical Analysis and Scientific Computing, Simula Research Laboratory, Oslo, Norway (\email{meg@simula.no})}
  \and André Massing\thanks{Department of Mathematical Sciences, Norwegian University of Science and Technology, Trondheim, Norway (\email{andre.massing@ntnu.no})}
}
\newcommand*{\addFileDependency}[1]{% argument=file name and extension
  \typeout{(#1)}% latexmk will find this if $recorder=0 (however, in that case, it will ignore #1 if it is a .aux or .pdf file etc and it exists! if it doesn't exist, it will appear in the list of dependents regardless)
  \@addtofilelist{#1}% if you want it to appear in \listfiles, not really necessary and latexmk doesn't use this
  \IfFileExists{#1}{}{\typeout{No file #1.}}% latexmk will find this message if #1 doesn't exist (yet)
}
\newcommand{\andre}[1]{\textcolor{cyan}{AM: #1}}
\newcommand{\R}{\mathbb{R}}
\DeclarePairedDelimiter{\norm}{\lVert}{\rVert} 
\DeclarePairedDelimiter{\abs}{\lvert}{\rvert}  
\newcommand{\T}{\mathcal{T}}
\newcommand{\F}{\mathcal{F}}
\newcommand{\I}{\mathcal{I}}
\newcommand{\vertiii}[1]{{\left\vert\kern-0.25ex\left\vert\kern-0.25ex\left\vert #1 
    \right\vert\kern-0.25ex\right\vert\kern-0.25ex\right\vert}}
\newcommand{\dx}{\,\mathrm{d}x}
\newcommand{\ds}{\,\mathrm{d}s}
\newcommand{\Hdiv}{$H(\mathrm{div})$}
\begin{document}

\maketitle

% REQUIRED
\begin{abstract}
The EMI (Extracellular-Membrane-Intracellular) model describes
electrical activity in excitable tissue, where the extracellular and
intracellular spaces and cellular membrane are explicitly represented.
The model couples a system of partial differential equations in the
intracellular and extracellular spaces with a system of ordinary
differential equations on the membrane. A key challenge for the EMI
model is the generation of high-quality meshes conforming to the
complex geometries of brain cells. To overcome this challenge, we propose
a novel cut finite element method (CutFEM) where the 
membrane geometry can be represented independently of a structured
and easy-to-generated background mesh for the remaining computational domain. 
    
Starting from a Godunov splitting scheme, the EMI model is split into separate PDE and ODE parts. The resulting PDE part is a non-standard elliptic interface problem, for which we
devise two different CutFEM formulations: one single-dimensional formulation with the intra/extracellular electrical potentials as unknowns, and a multi-dimensional formulation  that also introduces the electrical current over the membrane as an additional unknown leading to a penalized saddle point problem.  Both formulations are  augmented by suitably designed ghost penalties to ensure stability and convergence properties that are insensitive to how the membrane surface mesh cuts the background mesh. For the ODE part, we introduce a new unfitted discretization to solve the membrane bound ODEs on a membrane interface that is not aligned with the background mesh. Finally, we perform extensive numerical experiments to demonstrate that CutFEM is a promising approach to efficiently simulate electrical activity in geometrically resolved brain cells.

\end{abstract}

% REQUIRED
\begin{keywords}
  cut finite elements, electrophysiology, coupled ODE-PDEs, neuroscience
\end{keywords}

% REQUIRED
\begin{AMS}
  65M60, 65M85, 65N30, 65N85, 92C20
\end{AMS}

\iffalse
% MER: I inserted the relevant code from your suggestions above.
  \andre{
    A bit unsure whether we should pick the classification codes from
     	``65Mxx Numerical methods for partial differential equations, initial value and time-dependent initial-boundary value problems'' or
	``65Nxx 		Numerical methods for partial differential equations, boundary value problems'' as we have a time-dependent component/ODE but strictly speaking not a classical initial-value/time-dependent PDE...
    }
Suggestions:
    \begin{itemize}
        \item 65M60   	Finite element, Rayleigh-Ritz and Galerkin methods for initial value and initial-boundary value problems involving PDEs
        \item 65M85 Fictitious domain methods for initial value and initial-boundary value problems involving PDEs
        \item 65N30   	Finite element, Rayleigh-Ritz and Galerkin methods for boundary value problems involving PDEs
        \item 65N85   	Fictitious domain methods for boundary value problems involving PDEs
        \item 92C20 Neural biology
        \item 92C37 Cell biology
    \end{itemize}
\fi
    
\section{Introduction}
Today, classical paradigms for modelling electrophysiology such as the
bidomain model~\cite{sundnes2006computational} are being challenged by
new mathematical frameworks that explicitly represent and resolve the
geometry of extracellular and intracellular
spaces~\cite{agudelo-toroComputationallyEfficientSimulation2013,
  tveito2017cell, mori2009numerical, ellingsrud2020finite,
  stinner2020mathematical, jaegerEfficientNumericalSolution2021,
  de2023boundary}. This evolution is jointly driven by impressive
advances in medical physics and technology, allowing for in-vivo
imaging of intracellular and extracellular dynamics as well as
microscopy at the subcellular level, and by advances in computational
resources, making the previously uncomputable computable. These
cell-based frameworks, referred to as
\emph{extracellular-membrane-intracellular} (EMI)
models~\cite{tveito2017cell} or \emph{cell-by-cell electrophysiology}
models~\cite{de2023boundary} or \emph{electrophysiology with internal
  boundaries}~\cite{mori2009numerical}, take the form of coupled
systems of ordinary and partial differential equations (ODEs,
PDEs). Typically, the PDEs are defined relative to disjoint domains of
dimension $d$ coupled over a topologically $d-1$-dimensional internal
interface, while the ODEs are defined over the lower-dimensional
manifold alone. These types of equations frequently appear in
electrophysiology: to model the electrical activity of heart cells
with polarized membrane
features~\cite{jaegerEfficientNumericalSolution2021} or that in
neurons, extracellular space and glial
cells~\cite{agudelo-toroComputationallyEfficientSimulation2013,
  ellingsrud2020finite}, while the PDE subproblems are also frequently
encountered in chemical and electrical engineering such as for
modelling thermal contact
resistance~\cite{belgacem2015compositemedia}.

In all of these cases, the geometry is chiefly defined by the
lower-dimensional surface, e.g.~the cell membranes or composite media
interfaces, frequently in the form of an image-derived STL
surface. The associated cellular domains may be
non-convex, narrow and tortuous, placing high demands on the
resolution required for conforming volumetric meshes and essentially
forcing the numerical resolution to be dictated by geometrical rather
than approximation considerations.  
Moreover, without suitable post-processing, the quality of the
generated STL surface meshes might be insufficient to generate 
high-quality 3D unstructured volume meshes required for accurate finite
element-based computations.

A natural alternative is to instead consider unfitted meshes and
discretizations such as cut finite element methods
(CutFEM)~\cite{burmanCutFEMDiscretizingGeometry2015}, finite cell
methods (FCM)~\cite{RankRuessKollmannsbergerEtAl2012}, shifted
boundary method (SBM)~\cite{AtallahCanutoScovazzi2021}, aggregated
unfitted finite element methods (AgFEM)~\cite{BadiaVerdugoMartin2018},
as well as unfitted discontinuous Galerkin methods
~\cite{BastianEngwer2009,gurkanStabilizedCutDiscontinuous2019} and
methods based on immersogeometric
analysis~\cite{SchillingerDedeScottEtAl2012},
see~\cite{VerhooselLarsonBadiaEtAl2023} for a recent state-of-the-art
review.  A key advantage of the unfitted finite element technology is
that allows to embed the image-derived, and potentially lower quality,
surface mesh into an easy-to-generate structured background mesh in a
non-conforming manner without scarifying the approximation power of
the underlying finite element spaces. As a result, the costly
generation of high-resolution and high-quality unstructured 3D volume
meshes for finite element based computations is completely
circumvented, potentially leading to more streamlined and
semi-automated simulation pipelines. Such image-based finite element
analyses of biological media has been successfully used
in~\cite{SchillingerRuess2014,VerhooselvanZwietenvanRietbergenEtAl2015,ClausKerfridenMoshfeghifarEtAl2021},
to predict the elastic responses of bone structures. However, the use
of image-based models is barely explored in the computational
neuroscience community, and only very recently, unfitted finite
element methods have been employed to simulate astrocytic metabolism
in realistic three-dimensional
geometries~\cite{farina2021cut,Farina2022}, as well to solve the
electroencephalography (EEG) forward
problem~\cite{NuessingWoltersBrinckEtAl2016,ErdbrueggerWesthoffHoeltershinkenEtAl2022}.

\begin{figure}
\captionsetup[subfigure]{}
\hfill
\begin{subfigure}[T]{0.45\textwidth}
    \includegraphics[width=\textwidth]{./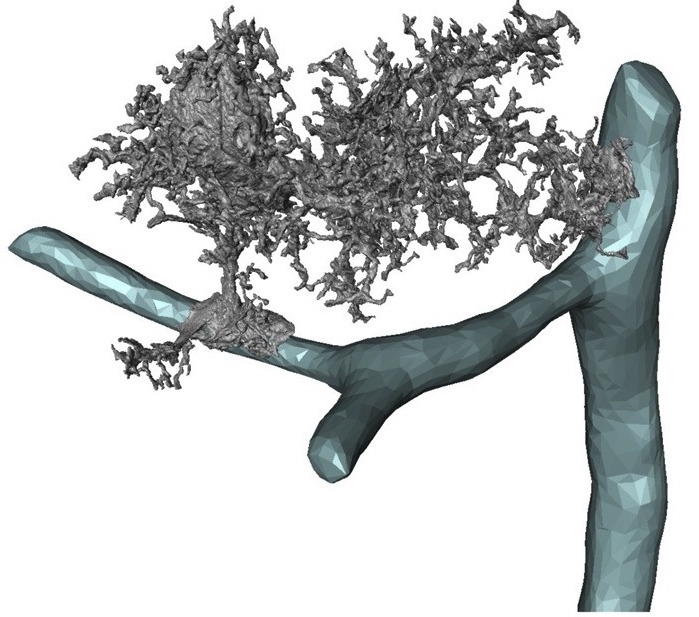}
    %\caption{}
\end{subfigure}
\hfill
\begin{subfigure}[T]{0.49\textwidth}
    \includegraphics[width=\textwidth]{./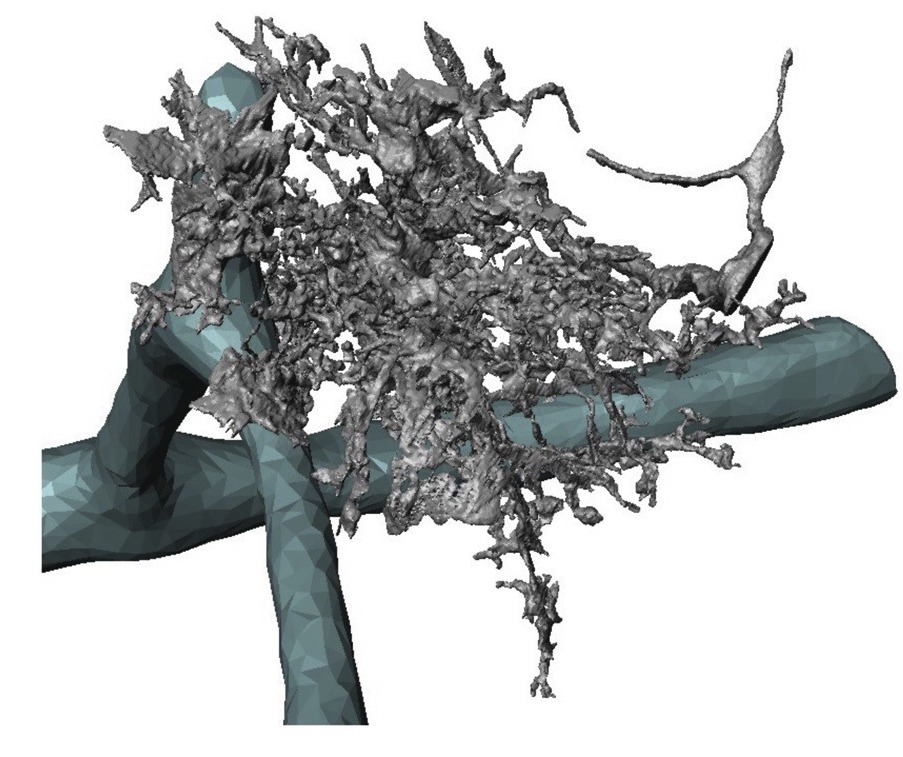}
    % \caption{}
\end{subfigure}
%\hfill
\caption{Example of 3d reconstruction of astrocytes morphology from \cite{CALI2019}, adapted(cropped) under the Creative Commons CC-BY license. }
\label{fig:cali}
\end{figure}

\subsection{New contributions and outline of the paper}
In all the aforementioned contributions, unfitted finite element
methods were devised to solve elliptic and parabolic problems with
boundary conditions posed on imaged-derived surfaces meshes. The
computational modeling of electric activity in geometrically resolved
excitable cells on the other hand requires to consider
\emph{multi-dimensional} surface-bulk problems which couples an
elliptic interface PDE defined on the extra/intracellular bulk domains
with a nonlinear system of ODEs solely posed on the membrane surface.
In the present work, we introduce two cut finite element methods for
the robust and accurate discretization of such multi-dimensional EMI
systems. Our method departs from an earlier developed Godunov
splitting scheme for the
model~\cite{tveito2017cell}, which decouples
the PDE part from the ODE part. For the PDE step, different weak
formulations can be derived~\cite{EMIchapter5} and we propose
CutFEM-based discretizations for two of them. First, we consider a
single-dimensional formulation with the intra/extracellular electrical
potentials as unknowns. This approach is inspired
by~\cite{jiUnfittedFiniteElement2017} who analyzed and successfully
applied such a discretization for approximating the temperature in
composite media with contact resistance. In the second,
multi-dimensional formulation, the electrical current over the
membrane is introduced as an additional unknown which formally results
in a generalized saddle point problem with a penalty-like term.
By augmenting the considered weak formulations with
suitably designed, so-called ghost
penalties~\cite{burmanCutFEMDiscretizingGeometry2015}, we ensure that
the resulting discretizations are geometrically robust so that their
stability and accuracy is not affected by how the membrane surface
mesh and the structured background mesh intersect.

In addition to the proposed CutFEM-based discretizations of elliptic problem
with a contact resistance interface condition,
we also introduce a CutFEM formulation of the ODE system posed on the unfitted membrane surface.
In contrast to fitting approaches,
solving the ODE system directly in the surface mesh nodes
makes it challenging from an implementation point of view
to interface with the PDE problem.
Moreover, the STL mesh can be of low quality with potentially large
variations in both the size and the aspect ratio of the individual triangles.
Our approach borrows ideas from the CutFEM discretization of surface PDEs~\cite{BurmanHansboLarsonEtAl2018}
and allows for an easy and seamless integration of the ODE and PDE step.
Solvers and simulation code for the present work are based on
Julia~\cite{bezansonJuliaFreshApproach2017} and the finite element
framework Gridap~\cite{badiaGridapExtensibleFinite2020}, and is openly
available~\cite{nanna_berre_2023_8068506}.
In summary, the main contributions of this paper are as follows.
\begin{itemize}
    \item
          We introduce a complete unfitted discretization approach for the
          EMI/cell-by-cell electrophysiology equations and demonstrate its
          robustness and applicability on idealized and image-based
          geometries.
    \item
          We propose different CutFEM discretization techniques for the
          Poisson interface problem, both single-dimensional and
          multi-dimensional formulations and provide detailed numerical
          evidence of their well-posedness, approximation properties, and
          geometrical robustness.
    \item
          We introduce an unfitted discretization technique for systems of
          nonlinear, spatially-dependent ODEs defined over the interface in
          the absence of a conforming interface representation.
\end{itemize}

This paper is structured as follows. We describe the cell-by-cell
electrophysiology (EMI) model in~\Cref{Section:model}, and outline a
classical operator splitting algorithm separating this system of
time-dependent, nonlinear, coupled equations into a set of ODEs and a
set of PDEs in the
brief~\Cref{section:splitting}. In~\Cref{section:spatial}, we
introduce two unfitted (CutFEM) formulations of the PDEs: a
single-dimensional formulation for the intracellular and extracellular
potential alone, and a multi-dimensional formulation for the
intracellular and extracellular potentials as well as the membrane
flux. We prove that both formulations are well-posed and discuss
unfitted stabilization operators. We turn to the system of ODEs
in~\Cref{sec:ode} and introduce a discretization technique for such
system of equations living on an interface, but without an explicit
discrete representation of the interface. We study the approximation
and solvability properties of the ODE-, PDE- and coupled
discretizations in~\Cref{sec:numerical}, before
concluding in~\Cref{sec:conclusion}.
% As part of our presentation, we provide numerical and theoretical
% evidence for the robustness and convergence properties of each CutFEM
% formulation.  Finally, we test the complete splitting scheme for the
% fully-coupled model with convergence test and modeling the ODE-system
% with the Hodgkin-Huxley model~\cite{Hodgkin1952}. Our 3D test
% examples show that solving the EMI model using a CutFEM formulation is
% a promising approach for a simpler and more flexible handling of
% complex cell geometries.

% Stabilized Lagrange multiplier
% \cite{BurmanHansbo2009
% \cite{Burman2013}
% Efficient implementation of overlapping/embedded mesh approaches
% \cite{DusterParvizianYangEtAl2008}
% \cite{MassingLarsonLogg2013}
% \cite{BadiaMartorellVerdugo2022}

\section{The cell-by-cell EMI electrophysiology model}
\label{Section:model}

\begin{figure}
    \centering
    \begin{overpic}[scale = 0.11]{./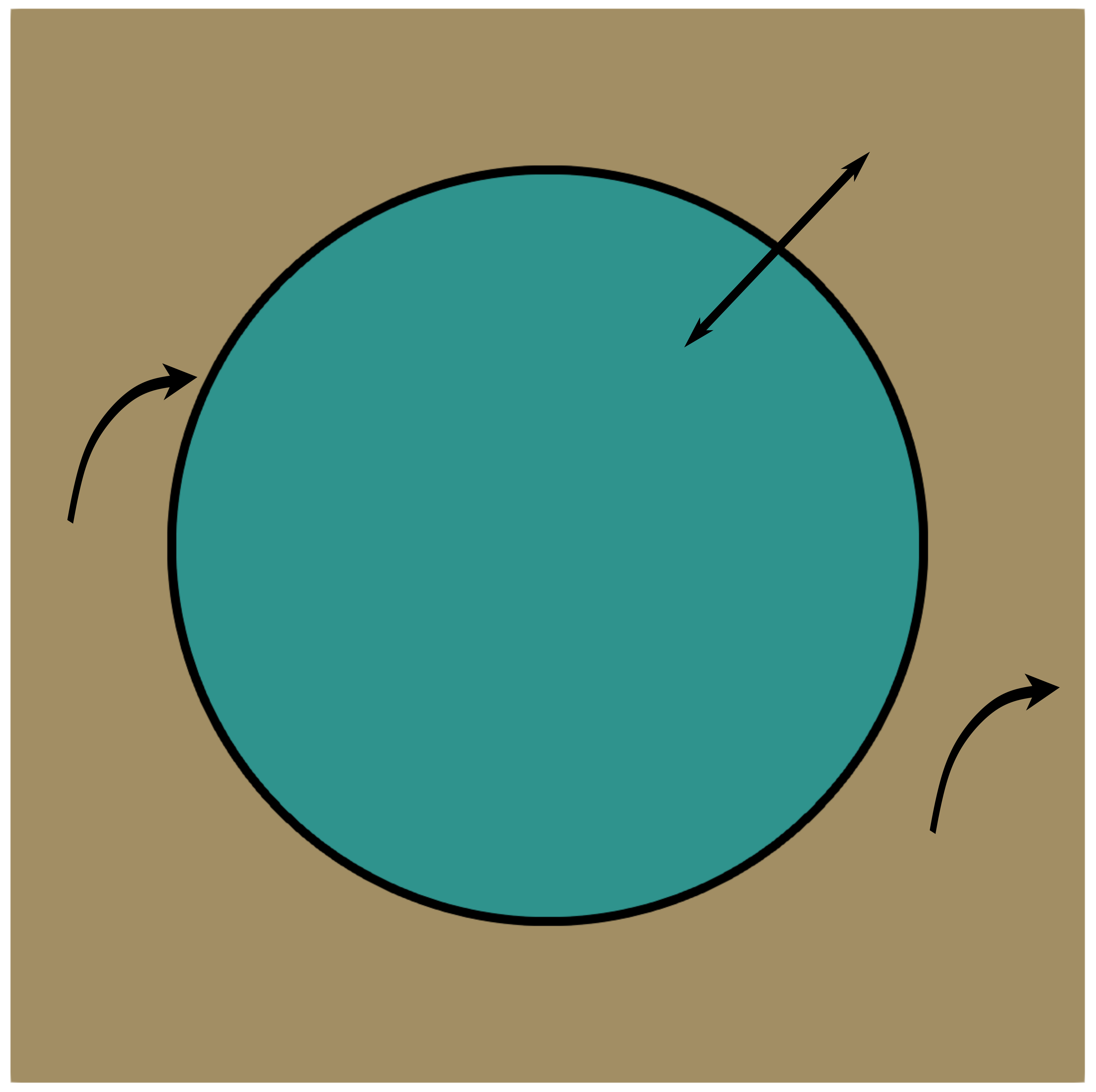}
        \put(76,78){\(\boldsymbol{n_i}\) }
        \put(60,75){\(\boldsymbol{n_e}\) }
        \put(45,35){\Large\(\Omega_i\)}
        \put(10,10){\Large\(\Omega_e\)}
        \put(4,40){\Large\(\Gamma\)}
        \put(77,14){\Large\(\partial\Omega\)}
    \end{overpic}
    \caption{Illustration of the domains in the EMI model.}
    \label{Figure:cell}
\end{figure}
The EMI
model~\cite{agudelo-toroComputationallyEfficientSimulation2013,tveito2017cell,EMIchapter1,
    de2023boundary} is a multi-dimensional multi-physics problem that
couples a linear Poisson-interface problem in the intra- and
extracellular domains to a system of nonlinear ODEs on the membrane
interface. More precisely, assume that the complete computational
domain $\Omega$ is separated into an extracellular domain $\Omega_e$
and an intracellular domain $\Omega_i$ with
\(\partial \Omega_i \cap \partial \Omega_e = \emptyset \), and
such that \(\Omega_i\) is strictly contained in \(\Omega \). The
membrane of the cell is defined as the intersection of the closures of
$\Omega_e$ and $\Omega_i$ and is denoted by $\Gamma$
(\cref{Figure:cell}). The EMI model describes the spatial-temporal
extracellular potential \(u_e\), intracellular potential \(u_i\), and
the transmembrane potential \(v\) governed by the following equations:
\begin{subequations}
    \label{emi}
    \begin{alignat}{2}
        -\nabla \cdot \left(\sigma_e \nabla u_e\right)  & = 0                                                      &  & \quad \quad \text{in }\Omega_e \label{Emi1},  \\
        -\nabla \cdot \left( \sigma_i \nabla u_i\right) & = 0                                                      &  & \quad \quad \text{in }\Omega_i , \label{Emi2} \\
        \sigma_e \nabla u_e \cdot \boldsymbol{n_e}      & = -\sigma_i \nabla u_i \cdot \boldsymbol{n_i} \equiv I_m &  & \quad \quad \text{on } \Gamma,\label{Emi3}    \\
        v                                               & = u_i - u_e                                              &  & \quad \quad \text{on } \Gamma,\label{Emi4}    \\
        C_m \partial_t v                                & = I_m - I_{\mathrm{ion}}(v,s)                            &  & \quad \quad \text{on } \Gamma,\label{Emi5}    \\
        \partial_t s                                    & = F(s,v)                                                 &  & \quad \quad \text{on }\Gamma,
    \end{alignat}
\end{subequations}
where the intracellular and extracellular conductivities are denoted by \(\sigma_i\) and \(\sigma_e\), and \(C_m\) is the membrane capacitance. Next, the ionic current density is given by \(I_{\text{ion}}\) and the total membrane current density by \(I_m\). Finally, \( \boldsymbol{n_i}\) denotes the normal pointing out from \(\Omega_i\), and \( \boldsymbol{n_e}=-\boldsymbol{n_i}\).

\begin{table}
    \begin{center}
        \caption{Parameter values for the Hodgkin-Huxley model}
        \label{table:hh_parameters}
        \begin{tabular}{  l c c  c c c }
            \toprule
            Parameter                     & Symbol                  & Value                 & Unit                                      & Reference                                    \\
            \midrule
            Membrane capacitance          & \(C_m\)                 & \(2 \cdot 10^{-5}\)   & \(\frac{\si{\nano F}}{\si{\micro m}^2}\)  & \cite{tveitoEvaluationAccuracyClassical2017} \\
            Na+ HH max conductivity       & \(\bar{g}_{Na}\)        & \(1.2 \cdot 10^{-3}\) & \(\frac{\si{\micro S}}{\si{\micro m}^2}\) & \cite{Hodgkin1952}                           \\
            K+ HH max conductivity        & \(\bar{g}_{K}\)         & \(3.6 \cdot 10^{-4}\) & \(\frac{\si{\micro S}}{\si{\micro m}^2}\) & \cite{Hodgkin1952}                           \\
            Stimulus                      & \( g_{\mathrm{stim}} \) & \(7 \cdot 10^{-3}\)   & \(\frac{\si{\micro S}}{\si{\micro m}^2}\) &                                              \\
            %Na+ leak conductivity & \(g^{Na}_L\) & 1 \(\cdot 10^{-6}\) & \(\frac{\si{\milli S}}{\si{\micro m}^2}\) \\
            %K+ leak conductivity &  \(g^{K}_L\) & 4 \(\cdot 10^{-6}\) & \(\frac{\si{\milli S}}{\si{\micro m}^2}\) \\
            Leak conductivity             & \(\bar{g}_{L}\)         & 3 \(\cdot 10^{-6}\)   & \(\frac{\si{\micro S}}{\si{\micro m}^2}\) & \cite{Hodgkin1952}                           \\
            Na+ equilibrium potential     & \(E_{Na}\)              & 50                    & \(\si{mV}\)                               & \cite{Hodgkin1952}                           \\
            K+ equilibrium potential      & \(E_{K}\)               & -77                   & \(\si{mV}\)                               & \cite{Hodgkin1952}                           \\
            Leak equilibrium potential    & \(E_{L}\)               & -54.5                 & \(\si{mV}\)                               & \cite{Hodgkin1952}                           \\
            Initial membrane potential    & \(v^0\)                 & -67.7                 & \(\si{mV}\)                               &                                              \\
            Resting membrane potential    & \(v_{rest}\)            & -65                   & \(\si{mV}\)                               &                                              \\
            %Synaptic time constant &\( \alpha\) & 2 & \(\si{ms}\) \\
            Intracellular conductivity    & \(\sigma_i\)            & 0.7                   & \(\frac{\si{\micro S}}{\si{\micro m}} \)  & \cite{tveitoEvaluationAccuracyClassical2017} \\
            Extracellular conductivity    & \(\sigma_e\)            & 0.3                   & \(\frac{\si{\micro S}}{\si{\micro m}}\)   & \cite{tveitoEvaluationAccuracyClassical2017} \\
            Initial HH gating value (Na+) & \(m^0\)                 & 0.0379                &                                           & \cite{Hodgkin1952}                           \\
            Initial HH gating value (Na+) & \(h^0\)                 & 0.688                 &                                           & \cite{Hodgkin1952}                           \\
            Initial HH gating value (K+)  & \(n^0\)                 & 0.276                 &                                           & \cite{Hodgkin1952}                           \\
            \bottomrule
        \end{tabular}
    \end{center}
\end{table}
The ionic current density \(I_{\text{ion}}\) governs the membrane potential and is subject to further modeling. In this work, the Hodgkin-Huxley model \cite{Hodgkin1952} will be used, where \(I_{\text{ion}}\) is given as follows
\begin{align}
    I_{\mathrm{ion}} & =\bar{g}_{Na}m^3h(v-E_{Na}) +\bar{g}_{K}n^4(v-E_{K}) + \bar{g}_{L}(v-E_L) - I_{\mathrm{app}},
\end{align}
with \(\bar{g}_{Na}\), \(\bar{g}_{K}\) and \(\bar{g}_{L}\) being the maximal conductivities related to sodium, potassium and a leak channel, respectively. The gating variables \(m\) and \(h\) are used to model the probability of gates being open to the passage of sodium ions, and the corresponding gating variable for potassium is denoted by \(n\). Equilibrium potentials, which indicate at which potentials sodium, potassium, and the leak channel are in equilibrium, are denoted by \(E_{Na}\), \(E_{K}\), and \(E_L\), respectively.
Importantly, these gating variables are modeled with the following ODEs
\begin{align}
    p_t = \alpha_p(v)(1-p)- \beta_p(v)p,
\end{align}
for \(p\in\{m,h,n\}\), where the following expressions are used for \(\alpha\) and \(\beta\)~\cite{Hodgkin1952},
\begin{alignat*}{3}
    \alpha_m & = 0.1\frac{25 - v_M}{\exp((25 - v_M)/10) - 1}, &  & \qquad
    \beta_m = 4\exp(-v_M/18),                                             \\
    \alpha_h & = 0.07\exp(-v_M/20),                           &  & \qquad
    \beta_h = \frac{1}{\exp((30-v_M)/10) + 1},                            \\
    \alpha_n & = 0.01\frac{10-v_M}{\exp((10-v_M)/10) - 1},    &  & \qquad
    \beta_n = 0.125\exp(-v_M/80).
\end{alignat*}
Here \(v_M = v - v_{\mathrm{rest}} \) with \(v_{\mathrm{rest}}\) being the resting potential. Parameter values for the Hodgkin-Huxley model are listed in~\Cref{table:hh_parameters}.

In addition, we impose an applied stimulus \(I_{\mathrm{app}}\)
\begin{align*}
    I_{\mathrm{app}} = g_{\mathrm{stim}} H(x) T(t)
\end{align*}
where
\begin{align*}
    H(x) = \begin{cases}
               1 & \text{ if } \boldsymbol{x} \in D, \\
               0 & \text{else},
           \end{cases}
    \qquad \quad
    T(t) = \begin{cases}
               1 & \text{ if } \boldsymbol{t} \in [t_1,t_2], \\
               0 & \text{ else},
           \end{cases}
\end{align*}
are the indicator functions of some domain \(D\subset \Omega\) and some time interval \([t_1,t_2]\).

\section{Operator splitting and time discretization of the EMI model}
\label{section:splitting}

The EMI model with Hodgkin-Huxley membrane dynamics consists of a PDE system coupled with an ODE system. We employ a standard operator splitting scheme to decouple the problem into smaller subproblems. Each subproblem can then be solved with solution methods dedicated to the specific subproblem. A classical approach is Godunov splitting~\cite{levequeFiniteVolumeMethods2002}, which has previously successfully been applied to the EMI model \cite{tveito2017cell,jaegerPropertiesCardiacConduction2019}, to decouple the membrane-confined ODE system from the PDE system.

To discretize the PDE step in time we apply an implicit Euler discretization and denote the size of the time step by \(\Delta t\).
For the ODE system, we discretize pointwise in time, using an explicit Euler step to pass from \(t^m\) to \(t^{m+1} = t^{m}+\Delta t\). The time-discretized splitting scheme is given in \cref{alg:time_disc}.
\begin{algorithm}
    \caption{Time-discretized splitting scheme for the EMI model}
    \label{alg:time_disc}
    \begin{algorithmic}
        \STATE{Given initial values \(v^0\), \(s^0\)}
        \FOR{timestep \(m=1:M\)}
        \STATE{(\textbf{ODE-step}) Compute $v^*$, $s^m$ by solving
            \begin{subequations}
                \label{eq:ode_timedisc}
                \begin{alignat}{1}
                    v^{*}(x) - v^{m-1}(x)   & = -\Delta t I_{\text{ion}}(v^{m-1}(x),s^{m-1}(x)), \\
                    s^{m+1}(x) - s^{m-1}(x) & = \Delta t F(v^{m-1}(x),s^{m-1}(x)).
                \end{alignat}
            \end{subequations}}
        \STATE{(\textbf{PDE-step}) Compute \(u^m_i\), \(u^m_e\),\(v^m\) by solving
            \begin{subequations}
                \label{spatial_emi}
                \begin{alignat}{2}
                    -\nabla \cdot \left( \sigma_e \nabla u^m_e \right) & = 0                                                          &  & \quad \quad \text{in }\Omega_e \label{eq:spatial_Emi1},   \\
                    -\nabla \cdot \left( \sigma_i \nabla u^m_i \right) & = 0                                                          &  & \quad \quad \text{in }\Omega_i , \label{eq:spatial_Emi2}  \\
                    \sigma_e \nabla u^m_e \cdot \boldsymbol{n_e}       & = -\sigma_i \nabla u^m_i \cdot \boldsymbol{n_i} \equiv I^m_m &  & \quad \quad \text{on } \Gamma,\label{eq:spatial_Emi3}     \\
                    u^m_i - u^m_e                                      & = C_{m}^{-1}\Delta t I^m_m +g                                &  & \quad \quad \text{on } \Gamma,\label{eq:spatial_Emi4}     \\
                    u^m_e                                              & = 0                                                          &  & \quad \quad \text{on }\partial\Omega,\label{spatial_Emi5}
                \end{alignat}
            \end{subequations}}
        where \(g=v^*\).
        \ENDFOR
        \RETURN \(u_e\), \(u_i\) and \(v\).
    \end{algorithmic}
\end{algorithm}
\section{Unfitted discretizations of the EMI PDE step}
\label{section:spatial}

The main focus of the remainder of this paper is discretizing the PDE system \eqref{spatial_emi} of the PDE-step in \cref{alg:time_disc}. This system takes the form of two Poisson problems coupled by a Robin-type interface condition, for which several finite element methods have been proposed. Belgacem et al.~\cite{belgacem2015compositemedia} modelled contact resistance problems via systems with the same structure, presenting two different formulations. The first uses only the potentials as unknowns, while the second is a hybrid dual formulation introducing also the current densities. Using the nomenclature introduced by Kuchta et al.~\cite{EMIchapter5}, we will refer to the first one as the single-dimensional primal formulation. Two other alternatives were presented in by Tveito et al.~\cite{tveito2017cell}, a multi-dimensional primal formulation with the potentials and the current across the membrane as unknowns, and a multi-dimensional mixed formulation where in addition also the current densities were unknowns. An unfitted method based on the single-dimensional primal formulation was presented by Ji et al.~\cite{jiUnfittedFiniteElement2017}. We first review this formulation, before presenting a cut finite element formulation based on the multi-dimensional primal formulation.

\subsection{A single-dimensional primal formulation of the EMI PDEs}
Following the presentation in \cite{EMIchapter5}, a weak formulation of \eqref{spatial_emi} can be derived as follows. Define the function spaces
\begin{equation*}
    V_i = H^1(\Omega_i), \quad  V_e =H^1_{0,\partial \Omega}(\Omega_e),
\end{equation*}
where the vanishing boundary-trace space is given by
\begin{align}
    H_{0,\partial \Omega}^1(\Omega_e) = \{ v_e \in  H^1(\Omega_e) \, | \,  \mathrm{tr} \, v_e = 0 \text{ on } \partial \Omega \},
\end{align}
and let \(V = V_i \times V_e\). We multiply~\eqref{eq:spatial_Emi2} by a test function \(v_i \in V_i\) and integrate over \(\Omega_i\), and multiply \eqref{eq:spatial_Emi1} with a test function \(v_e \in V_e\) and integrate over \(\Omega_e\). Integration by parts then gives
\begin{subequations}
    \label{sp_halfweak}
    \begin{alignat}{2}
        \int_{\Omega_e} \sigma_e \nabla u_e \cdot \nabla v_e \dx -\int_{\Gamma} \sigma_e \nabla u_e \cdot \boldsymbol{n_e} v_e \ds & =0 , \\
        \int_{\Omega_i} \sigma_i \nabla u_i \cdot \nabla v_i \dx -\int_{\Gamma} \sigma_i \nabla u_i \cdot \boldsymbol{n_i} v_i \ds & =0.
    \end{alignat}
\end{subequations}
Inserting \eqref{eq:spatial_Emi3} and \eqref{eq:spatial_Emi4} into \eqref{sp_halfweak} gives the abstract problem: find \(u \in V\)  such that
\begin{equation}
    \label{eq:single_weak}
    a(u,v) = l(v) \quad \forall v \in V,
\end{equation}
where the bilinear form \(a(\cdot,\cdot)\) and the linear form \(l(\cdot)\) are given by
\begin{equation}
    a(u,v)= \int_{\Omega_e} \sigma_i \nabla u_i\cdot \nabla v_i \dx  +\int_{\Omega_i} \sigma_e \nabla u_e \cdot \nabla v_e \dx
    +\frac{C_m}{\Delta t}\int_{\Gamma} (u_e - u_i) (v_e -v_i ) \ds,
    \label{eq:emi_fem_bilinear}
\end{equation}
\begin{equation}
    l(v) = \frac{C_m}{\Delta t}\int_{\Gamma} g (v_i- v_e) \,\ds. \label{variational_rhs}
\end{equation}

The bilinear form \eqref{eq:emi_fem_bilinear} induces the following natural energy norm for \(v=(v_i,v_e) \in V = V_i \times V_e\),
\begin{equation}
    \label{eq:emi_norm}
    \norm{v}^2_{a} \vcentcolon = \sigma_e \norm{ \nabla v_e}_{L^2(\Omega_e)}^2 + \sigma_i\norm{\nabla v_i}_{L^2(\Omega_i)}^2 + \frac{C_m}{\Delta t} \norm{v_e - v_i}^2_{\Gamma} .
\end{equation}
The weak formulation is clearly coercive and bounded with respect to \(\norm{\cdot}_a\), and therefore the formulation \eqref{eq:single_weak} is well-posed
thanks to the Lax-Milgram theorem.

\subsubsection{Cut finite element formulation}
We discretize the formulation~\eqref{eq:single_weak} using a cut finite element method (CutFEM)~\cite{burmanCutFEMDiscretizingGeometry2015} approach to allow for a more flexible handling of the membrane interface
geometry. We start from a structured background mesh \(\widetilde{\T}_h\) consisting of either $n$-simplices or $n$-cubes which
cover \(\bar{\Omega}\)  (see \cref{fig:mesh_single}).
Then the active background mesh associated with each of the two subdomains is given by the collections of elements which intersect the corresponding physical domain,
\begin{equation}
    \T_{h,j} = \{T \in \widetilde{\T}_h \, \vert \: T  \cap \overset{\circ}{\Omega}_j \neq \emptyset \}, \quad j \in \{i, e\},
\end{equation}
where  \( \overset{\circ}{\Omega}_i=\Omega_i \setminus \Gamma\), and  \(\overset{\circ}{\Omega}_e=\Omega_e \setminus (\Gamma\ \cup \partial\Omega) \).
\begin{figure}
    \begin{subfigure}[b]{0.3\textwidth}
        \centering
        \begin{overpic}[width=\textwidth]{./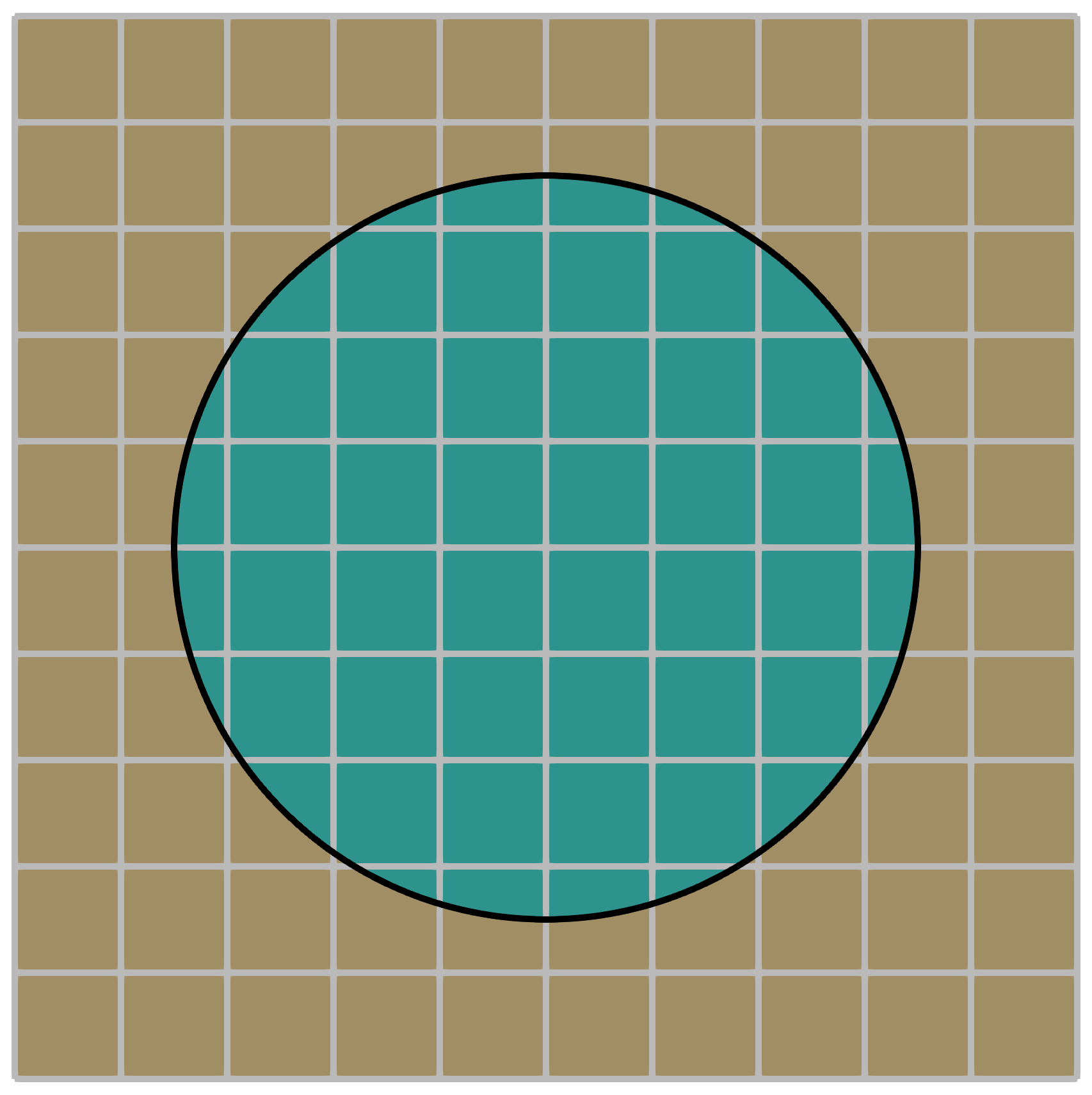}
            %\label{}
        \end{overpic}
    \end{subfigure}
    \hfill
    \begin{subfigure}[b]{0.3\textwidth}
        \centering
        \begin{overpic}[width=\textwidth]{./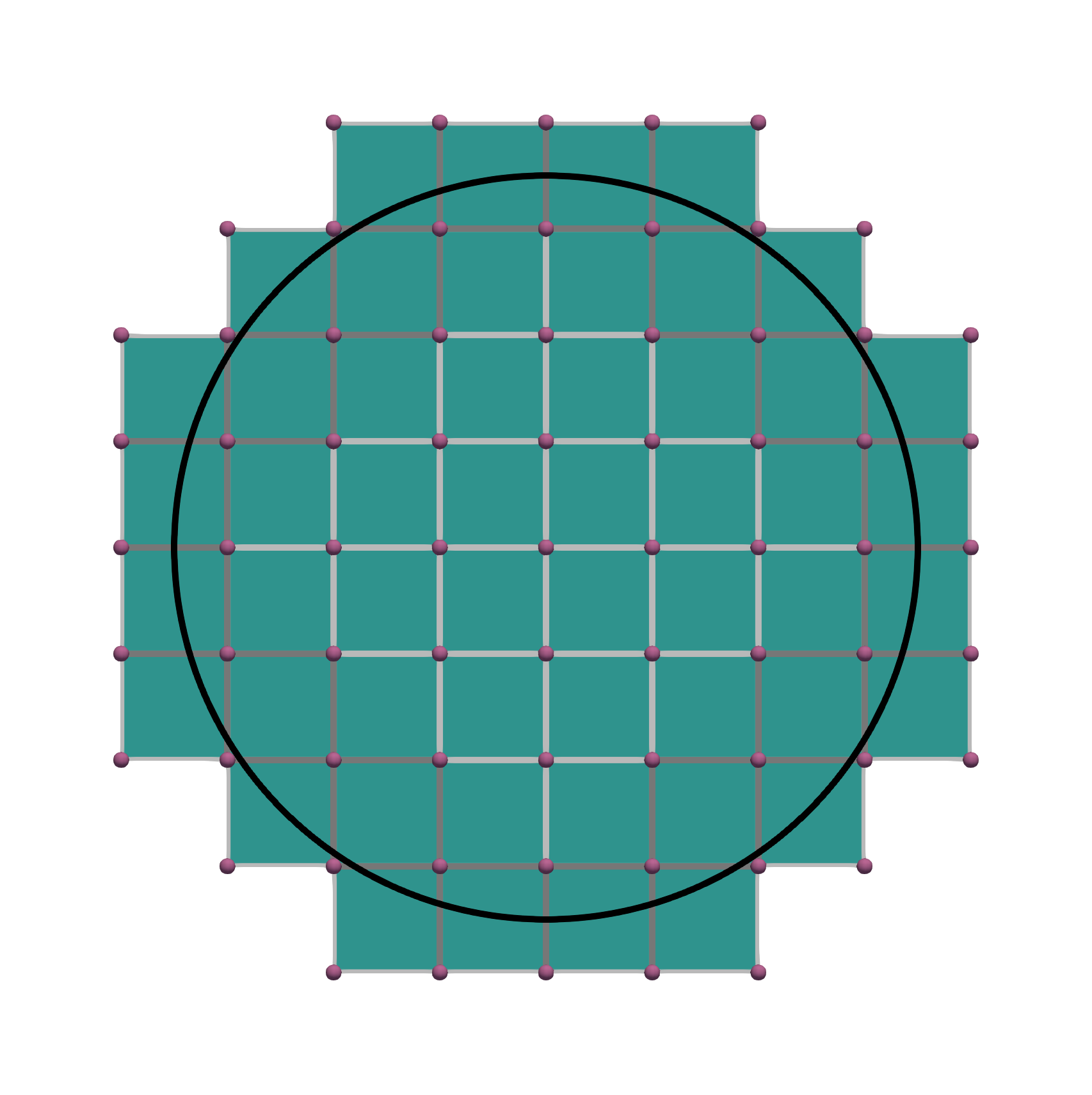}
            %\label{}
            \put(41,47){\huge\(\T_{h,i}\)}
        \end{overpic}
    \end{subfigure}
    \hfill
    \begin{subfigure}[b]{0.3\textwidth}
        \centering
        \begin{overpic}[width=\textwidth]{./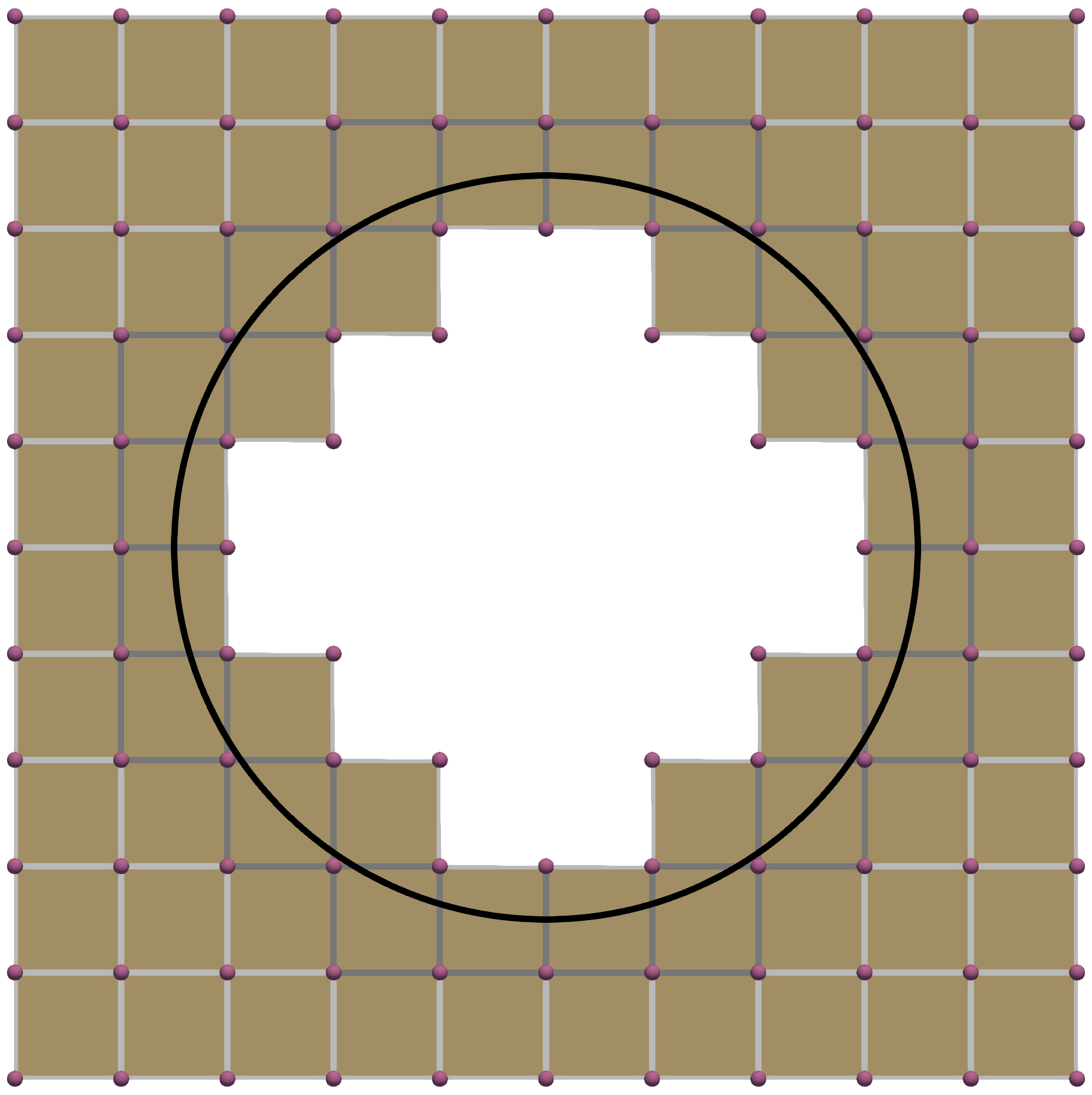}
            %\label{}
            \put(70,80){\huge\(\T_{h,e}\)}
        \end{overpic}
    \end{subfigure}
    \hfill
    \caption{Illustration of the total computational domain(left) the single-dimensional discretization and the (left) intracellular and (right) extracellular computational domains, with corresponding ghost facets. }
    \label{fig:mesh_single}
\end{figure}
The sets of interior faces are denoted by
\begin{equation}
    \mathcal{F}_{h,i} = \{F = T^+ \cap T^-  \, \vert \: T^+,T^- \in \mathcal{T}_{h,i} \, \mathrm{ and } \,  T^+ \neq T^{-} \}.
\end{equation}
Also, the set of ghost penalty faces \(\F_h^g\), is given by the set of interior faces in the active mesh belonging to elements that are intersected by the boundary $\Gamma$,
\begin{equation}
    \F_{h,i}^g = \{ F \in \F_{h,i} : T^+ \cap \Gamma \neq \emptyset \lor T^- \cap \Gamma \neq \emptyset \} .
\end{equation}
Let the jump across an interior face $F \in \F_h$ be
\begin{equation}
    [w]\vert_F = w^+_F - w^-_F,
\end{equation}
where \(w^\pm(x)= \lim_{t \to 0^+} w(x \mp t \mathbf{n}_F) \) for some chosen unit face normal $\mathbf{n}_F$.

On a given mesh $\T_h$, the corresponding space of piecewise continuous polynomials of degree \(k\) is denoted by
\begin{equation} \mathbb{P}^k_\mathrm{c}(\mathcal{T}_h) =  \{ v \in C(\Omega_h) : v\vert_T \in \mathbb{P}^k(T)\; \forall T \in \T_h \},
\end{equation}
where \(\Omega_h = \bigcup_{T\in \T_h} T,\)
and \(\mathbb{P}^k(T)\) is the set for all (multivariate if applicable) polynomials with degree smaller than or equal to \(k\) on \(T\).
As discretization of the function space $V_i$ and $V_e$, we define the finite element spaces
\begin{equation}
    V_{h,i} = \mathbb{P}^k_\mathrm{c}(\T_{h,i}), \quad V_{h,e}=\{v \in \mathbb{P}^k_\mathrm{c}(\T_{h,e}) \, | \, v\vert_{\partial \Omega}=0\},
\end{equation}
and the resulting total finite element space \(V_{h} = V_{h,i} \times V_{h,e}\).

Since $V_h \subset V$, the Lax-Milgram theorem again ensures that the resulting discrete problem where we seek \(u_h=(u_{h,i},u_{h,e}) \in V_h\) such that
\begin{equation}
    \label{eq:weak_cutfem_unstabilized}
    a(u_h,v)  = l(v) \quad \forall\; v \in V_h
\end{equation}
is well-posed. Furthermore, optimal convergence estimates can be derived~\cite{jiUnfittedFiniteElement2017}, even without any CutFEM related addition stabilization~\cite{liHighOrderInterfacepenalty2022}, despite using an unfitted finite element approach. The main reason is that the interface conditions are included naturally  and thus leaves the bilinear form unchanged in the discrete formulation,
so that the coercivity properties are automatically inherited from continuous weak formulation~\eqref{eq:single_weak}.
This is in contrast to problems where the weak form needs to be altered because of the imposition of essential boundary or interface
conditions, see e.g.~the discussion in~\cite{burmanCutFEMDiscretizingGeometry2015,gurkanStabilizedCutDiscontinuous2019}.

However, without any stabilization or further intervention, the
possible presence of so-called small cut elements $T\cap \Omega_j $ where $|T \cap \Omega_j|
    \ll |T|$ can lead to a nearly singular system matrix and consequently
excessively high condition numbers,
which in turn can dramatically limit the
use and development of efficient iterative solvers.
To render the discretization scheme truly geometrical robust, we want
to ensure that the resulting condition numbers only depends as usual on
the size of the linear system but not on the relative positioning
of the membrane interface within the background mesh.
We therefore add to~\eqref{eq:weak_cutfem_unstabilized} the following ghost penalty
\cite{burmanFictitiousDomainFinite2012,MassingLarsonLoggEtAl2013a,gurkanStabilizedCutDiscontinuous2019},
\begin{equation}
    \label{eq:ghost_single}
    g_{h}(v,w)=  g_{h,i}(v_i,w_i) + g_{h,e}(v_e,w_e),
\end{equation}
where
\begin{equation}
    g_{h,j}(v_j,w_j)=\sum_{m=0}^k  \gamma_m h^{2m+1} \left([\partial_n^m v_j],[\partial_n^m w_j]\right)_{ \F_{h,j}^g }.
\end{equation}
The $h$-scaling of the ghost penalty employed here is chosen such that
the physical $L^2$-norm augmented with the semi-norm induced by the
ghost penalty is equivalent to the $L^2$-norm on the corresponding
active mesh,
$$
    \|v_{h,j}\|_{\T_{h,j}}^2 \sim
    \|v_{h,j}\|_{\Omega_j}^2 + |v_{h,j}|^2_{g_{h, j}} \quad \forall v_{h,j} \in V_{h,j}, \quad j \in \{i, e\},
$$
see, e.g.~\cite{gurkanStabilizedCutDiscontinuous2019} for a detailed discussion.

The final stabilized CutFEM single-dimensional formulation is then: find \(u_h=(u_{h,i},u_{h,e}) \in V_h\) such that
\begin{equation}
    \label{eq:weak_cutfem}
    A_h(u_h,v) \equiv a(u_h,v) +  g_{h}(u_h,v)  = l(v),
\end{equation}
for all \(v=(v_i,v_e) \in V_h\). This formulation converges at optimal rates in the \(H^1\)- and \(L^2\)-norms, and the corresponding condition number is bounded by \(\mathcal{O}(h^{-2})\)~\cite{jiUnfittedFiniteElement2017}.

\subsection{A multi-dimensional weak formulation}
Next, we discuss how the EMI PDEs \eqref{spatial_emi} can be cast into a penalized saddle point problem using the multi-dimensional primal formulation presented in~\cite{EMIchapter5}.
As for the single-dimensional formulation, we start from the function spaces
\begin{align}
    V_i := H^1(\Omega_i) , \quad V_e = H_{0}^1(\Omega_e), \quad V= V_i \times V_e.
\end{align}
Next, we multiply \eqref{eq:spatial_Emi1} and \eqref{eq:spatial_Emi2} with test functions from \(V_e\) and \(V_i\) and use integration by parts to obtain
\begin{align}
    \int_{\Omega_e} \sigma_e \nabla u_e\cdot \nabla v_e \dx - \int_{\Gamma} v_e I_m \ds & =0, \\
    \int_{\Omega_i} \sigma_i \nabla u_i\cdot \nabla v_i \dx + \int_{\Gamma} v_i I_m \ds & =0.
\end{align}
In contrast to the single-dimensional formulation, we now leave \(I_m\) as a separate unknown. Define the additional function spaces
\begin{equation}
    Q =H^{-\frac{1}{2}}(\Gamma),  \quad Q_c =  L^2(\Gamma),
\end{equation}
and let \(I_m \in Q_c\).
Multiply \eqref{eq:spatial_Emi4} by \(j_m \in Q_c\) to obtain the last equation,
\begin{align}
    \int_{\Gamma} (u_i-u_e) j_m \,\ds  - \frac {\Delta t}{C_m}\int_{\Gamma} I_m j_m \,\ds & = \int_{\Gamma} g j_m \,\ds.
    \label{eq:spatial_Emi4-weak}
\end{align}
Note that the main reason for choosing the subspace $Q_c \subset Q$ as function space for $I_m, j_m$
is to make sense of the second term appearing in the left-hand side of~\eqref{eq:spatial_Emi4-weak}.

The resulting multi-dimensional primal formulation is then: find \(u_i \in V_i \), \(u_e \in V_e\), \(I_m \in Q_c\) such that%
\begin{subequations}
    \label{eq:multi_weak}
    \begin{align}
        \int_{\Omega_e} \sigma_e \nabla u_e\cdot \nabla v_e \,\dx -\int_{\Gamma} v_e I_m \,\ds & =0 , \label{eq:multi_weak1}                        \\
        \int_{\Omega_i} \sigma_i \nabla u_i\cdot \nabla v_i \,\dx +\int_{\Gamma} v_i I_m \,\ds & =0,                                                \\
        \int_{\Gamma} (u_i-u_e) j_m \,\ds  - \frac{\Delta t}{C_m}\int_{\Gamma} I_m j_m \,\ds   & = \int_{\Gamma} g j_m  \ds \label{eq:multi_weak3},
    \end{align}
\end{subequations}
for all \(v_i \in V_i\),  \(v_e \in V_e\) and \(j_m \in Q_c\). After defining the corresponding bilinear and linear forms
\begin{align*}
    a(u, v) & = \sigma_e (\nabla u_e , \nabla v_e )_{\Omega_e} + \sigma_i (\nabla u_i , \nabla v_i )_{\Omega_i}, \\
    b(v, I) & = (v_i - v_e, I)_{\Gamma}, \qquad
    c(I, j) = (I, j)_{\Gamma}, \qquad
    l(j) = (g, j)_{\Gamma},
\end{align*}
and the associated total bilinear form 
\begin{align}
    A(u, I; v, j) = a(u,v) + b(u, j) + b(v, I) - \frac{\Delta t}{C_m} c(I, j),
\end{align}
we can write the problem in the form of a penalized saddle point problem \cite{braessStabilitySaddlePoint1996}: find \((u,I_m) \in V \times Q_c\) such that
\begin{align}
    \label{eq:final_weak_multi}
    A(u,I_m;v,j_m) = l(j_m)
\end{align}
for all \((v,j_m) \in V \times Q_c\).

Next, we show that problem~\eqref{eq:final_weak_multi} is well-posed. To analyze its well-posedness,
we introduce the following norms on the spaces $V$ and $Q_c,$
\begin{align}
    \norm{u}^2_{V}      & = \sigma_e \norm{\nabla u_e}^2_{\Omega_e} +  \sigma_i \norm{\nabla u_i}^2_{\Omega_i} +\norm{u_i-u_e}^2_{\Gamma} , \\
    \norm{j_m}^2_{Q}  & = \norm{j_m}^2_{-1/2,\Gamma}, \\
    \norm{j_m}^2_{Q_c}  & = \norm{j_m}^2_{\Gamma},                                                                                          \\
    \vertiii{(v,j_m)}^2 & = \norm{v}^2_{V} + \norm{j_m}^2_Q +  \frac{{\Delta t}}{C_m} \norm{j_m}^2_{Q_c}.
\end{align}
The following lemma then shows that the weak formulation~\eqref{eq:final_weak_multi} indeed satisfies an inf-sup condition.
% \begin{theorem}
%     Suppose that \(\Delta t \leqslant C_m\). Then the saddle point formulation of the EMI problem \eqref{eq:saddle_point_emi} is stable, i.e. 
%     \begin{align*}
%             \inf_{(u,\mu)\in V \times Q_c} \sup_{(v,\mu)\in V \times Q_c} \frac{A(u,\lambda; v, \mu)}{\vertiii{(u,\mu)}\cdot\vertiii{(v,\mu)}} \geqslant \gamma,
%     \end{align*}
%     with \(\gamma > 0\) is independent of \(\Delta t \) and \( C_m\).
% \end{theorem}
\begin{lemma}
    For \(0\leqslant \frac{{\Delta t}}{C_m} \leqslant 1\) and \(g \in L^2(\Gamma)\) the saddle point formulation \cref{eq:final_weak_multi} is inf-sup stable, i.e., there exists a \(\gamma >0 \) such that
    \begin{align*}
        \inf_{(u,I_m)\in V \times Q_c} \sup_{(v,j_m)\in V \times Q_c} \frac{A(u,I_m; v, j_m)}{\vertiii{(u,I_m)}\cdot\vertiii{(v,j_m)}} \geqslant \gamma.
\end{align*}
\end{lemma}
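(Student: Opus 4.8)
The plan is to establish the inf-sup bound by exhibiting, for each given pair $(u,I_m) \in V \times Q_c$, a test pair $(v,j_m) \in V \times Q_c$ for which $A(u,I_m;v,j_m) \gtrsim \vertiii{(u,I_m)}^2$ while $\vertiii{(v,j_m)} \lesssim \vertiii{(u,I_m)}$; dividing and taking infima/suprema then yields the claim. (Alternatively one could invoke the abstract theory for penalized saddle point problems in~\cite{braessStabilitySaddlePoint1996}, but the explicit construction keeps the dependence on $\Delta t/C_m$ transparent.) I will assemble the test pair from three building blocks, each controlling one contribution to $\vertiii{(u,I_m)}^2$.

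The central ingredient — and the step I expect to be the main obstacle — is a Ladyzhenskaya--Babu\v{s}ka--Brezzi (LBB) condition for the coupling form $b(v,I) = (v_i - v_e, I)_\Gamma$, namely $\sup_{v \in V} b(v,I_m)/\norm{v}_V \geqslant \beta \norm{I_m}_Q$ for some $\beta > 0$. Since $\Gamma$ is the entire boundary of $\Omega_i$, any $\phi \in H^{1/2}(\Gamma)$ admits a bounded $H^1$-extension $E\phi$ into $\Omega_i$; setting $v_i = E\phi$ and $v_e = 0 \in V_e$ produces a field with jump $v_i - v_e = \phi$ on $\Gamma$ and $\norm{v}_V \lesssim \norm{\phi}_{1/2,\Gamma}$. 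Testing against this family and using the dual characterisation of the $H^{-1/2}(\Gamma)$-norm gives the LBB inequality with $\beta$ depending only on the extension constant. One checks here in passing that $\norm{\cdot}_V$ is genuinely a norm: $\norm{u}_V = 0$ forces $\nabla u_e = \nabla u_i = 0$ and $u_i - u_e = 0$ on $\Gamma$, whence $u_e = 0$ by the homogeneous condition on $\partial\Omega$ and then $u_i = 0$.

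With the LBB in hand, fix $(u,I_m)$ and let $w \in V$ be the scaled LBB field with $b(w,I_m) \geqslant \beta\norm{I_m}_Q^2$ and $\norm{w}_V \leqslant \norm{I_m}_Q$. I will take $v = u + \delta_1 w$ and $j_m = -I_m + \delta_2(u_i - u_e)$. Substituting into $A$, the diagonal part $(u,-I_m)$ contributes $a(u,u) + \tfrac{\Delta t}{C_m}\norm{I_m}_{Q_c}^2$ (the two copies of $b(u,I_m)$ cancelling), the choice $\delta_2(u_i-u_e)$ contributes $\delta_2\norm{u_i-u_e}_\Gamma^2$ through $b$, and $\delta_1 w$ contributes $\delta_1 b(w,I_m) \geqslant \delta_1\beta\norm{I_m}_Q^2$. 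The two cross terms $\delta_1 a(u,w)$ and $\tfrac{\Delta t}{C_m}\delta_2(I_m, u_i-u_e)_\Gamma$ are absorbed by Young's inequality; here the hypothesis $\tfrac{\Delta t}{C_m} \leqslant 1$ is used to bound $(\tfrac{\Delta t}{C_m})^2 \leqslant \tfrac{\Delta t}{C_m}$, so that the latter cross term is dominated by the already-controlled penalty contribution $\tfrac{\Delta t}{C_m}\norm{I_m}_{Q_c}^2$.

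Choosing e.g. $\delta_1 = \beta/2$ and $\delta_2 = 1$ then leaves a strictly positive multiple of each of $\norm{\nabla u_e}_{\Omega_e}^2$, $\norm{\nabla u_i}_{\Omega_i}^2$, $\norm{u_i-u_e}_\Gamma^2$, $\norm{I_m}_Q^2$, and $\tfrac{\Delta t}{C_m}\norm{I_m}_{Q_c}^2$, that is, $A(u,I_m;v,j_m) \geqslant c_0\,\vertiii{(u,I_m)}^2$. Finally the triangle inequality together with the continuous embedding $L^2(\Gamma) \hookrightarrow H^{-1/2}(\Gamma)$ and $\tfrac{\Delta t}{C_m} \leqslant 1$ gives $\vertiii{(v,j_m)} \leqslant C\,\vertiii{(u,I_m)}$ with $C$ independent of $\tfrac{\Delta t}{C_m}$, so $\gamma = c_0/C$ furnishes the stated bound. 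The delicate point throughout is to ensure that neither $c_0$ nor $C$ degenerates as $\tfrac{\Delta t}{C_m} \to 0$, which is precisely why $I_m$ is measured in the combined norm appearing in $\vertiii{\cdot}$.
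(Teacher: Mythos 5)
Your proof is correct, and it takes a genuinely different route from the paper's. The paper constructs no test functions at all: it appeals to the abstract theory of penalized saddle point problems, \cite[Lemma~3]{braessStabilitySaddlePoint1996}, cites \cite{lamichhaneMortarFiniteElements2004} for the well-posedness (in particular the LBB condition for $b$) of the unpenalized problem with $\Delta t = 0$, and then verifies the single additional hypothesis of that lemma,
\begin{align*}
  \frac{a(u,u)}{\norm{u}_V} + \sup_{j_m \in Q_c} \frac{b(u,j_m)}{\norm{j_m}_Q + \varepsilon \norm{j_m}_{Q_c}} \geqslant \alpha \norm{u}_V \quad \forall u \in V, \qquad \varepsilon = \sqrt{\tfrac{\Delta t}{C_m}} \in [0,1],
\end{align*}
by taking $j_m = u_i\vert_\Gamma - u_e\vert_\Gamma \in H^{1/2}(\Gamma)\subset L^2(\Gamma)$ in the supremum. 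You instead prove the inf-sup bound from scratch: you establish the LBB condition for $b$ yourself via a bounded $H^1(\Omega_i)$-extension of $H^{1/2}(\Gamma)$ data (precisely the ingredient the paper outsources to \cite{lamichhaneMortarFiniteElements2004}), and then run the classical Brezzi-type construction $v = u + \delta_1 w$, $j_m = -I_m + \delta_2(u_i-u_e)$ with Young-inequality absorption; your algebra checks out, including the cancellation of the two $b(u,I_m)$ terms and the use of $\Delta t/C_m \leqslant 1$ to absorb the penalty cross term into the jump and penalty contributions. It is worth noting that the two proofs share the decisive trick --- pairing the multiplier slot with the jump $u_i - u_e$, which the trace theorem places in $L^2(\Gamma)$ --- the paper inside Braess's condition, you inside your choice of $j_m$. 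What your approach buys: it is self-contained, it never needs coercivity of $a$ on the kernel of $b$ (the jump term in $\norm{\cdot}_V$ is controlled directly by your $\delta_2$-term), and it makes completely explicit that $\gamma$ is uniform over $\Delta t/C_m \in [0,1]$, a fact the paper inherits implicitly from the cited lemma. What the paper's approach buys: brevity and modularity --- once the Braess framework applies, only one scalar inequality must be checked, and no test-function algebra is needed.
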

\begin{proof}
Well-posedness for penalized saddle point problems is discussed in \cite{braessStabilitySaddlePoint1996}. 
More specifically, if the problem without the penalization term satisfies the LBB conditions, meaning that if the unpenalized problem is well-posed, the penalized problem is stable if in addition the following requirement is satisfied \cite[Lemma 3]{braessStabilitySaddlePoint1996},
\begin{align}
    \frac{a(u,u)}{\norm{u}_V} + \sup_{\I_m \in Q_c} \frac{b(u,I_m)}{\norm{I_m}_Q + \varepsilon \norm{I_m}_{Q_c}} \geqslant \alpha \norm{u}_V
    \quad \forall u \in V
    , \label{eq:requirement_braess}
\end{align}
for some \(\alpha>0\), where \(\varepsilon =  \sqrt{\frac{{\Delta
t}}{C_m}}\) need to satisfy \(0\leqslant\varepsilon\leqslant1\). The
unpenalized version of~\eqref{eq:final_weak_multi} is shown to be
well-posed in \cite{lamichhaneMortarFiniteElements2004}. It
remains to show that \eqref{eq:requirement_braess} holds.
We start by noting that we have a Gelfand triple with continuous
embeddings
\(H^{\frac{1}{2}}(\Gamma) \subset L^2(\Gamma) \subset H^{-\frac{1}{2}}(\Gamma)\)
and that the jump \(  u_i\vert_\Gamma -
u_e\vert_\Gamma\) of a function \(u \in V\) is in \( H^{\frac{1}{2}}(\Gamma)\). 
This implies that  \(\norm{u_i - u_e}_Q \leqslant C_1 \norm{u_i -
u_e}_{Q_c} \leqslant C_2 \norm{u_i - u_e}_{1/2,\Gamma}\) for some positive constants 
\(C_1, C_2\), and thus we conclude that
\begin{align}
    &\frac{a(u,u)}{\norm{u}_V} + \sup_{\I_m \in Q_c} \frac{b(u,I_m)}{\norm{I_m}_Q + \varepsilon \norm{I_m}_{Q_c}} \\ 
    &\geqslant  \frac{ \sigma_e \norm{\nabla u_e}^2_{\Omega_e} +  \sigma_i \norm{\nabla u_i}^2_{\Omega_i}}{\norm{u}_V} +  \frac{b(u,u_i\vert_\Gamma - u_e\vert_\Gamma)}{\norm{u_i- u_e}_Q + \varepsilon \norm{u_i- u_e}_{Q_c}}\\
    &\geqslant \frac{ \sigma_e \norm{\nabla u_e}^2_{\Omega_e} +  \sigma_i \norm{\nabla u_i}^2_{\Omega_i}}{\norm{u}_V} + \frac{\norm{u_i- u_e}^2_\Gamma}{C\norm{u_i- u_e}_\Gamma} 
    \geqslant \alpha \norm{u}_V.
\end{align}
\end{proof}

\subsubsection{Cut finite element formulation}
Next, we devise a CutFEM formulation for the multi-dimensional primal formulation.
As for the single-dimensional problem, the two active background meshes \(\T_{h,i}\) and \(\T_{h,e}\)
are used to define approximation spaces for the intra and extra-cellular
potentials (see \Cref{fig:mesh_multi_theory}).
To discretize the surface-bounded current \(I_m\),
we now also define an active mesh associated with the membrane surface,
\begin{equation}
    \T_{h,\Gamma} = \{T \in \widetilde{\T}_h \, | \: T \cap \Gamma \neq \emptyset \},
\end{equation}
and the corresponding set of interior faces,
\begin{equation*}
    \mathcal{F}_{h,\Gamma} = \{F = T^+ \cap T^- \, \vert \: \,  T^+,T^-\in \T_{h,\Gamma} \, \mathrm{and} \,  T^+ \neq T^{-}\}.
\end{equation*}
\begin{figure}
    \begin{subfigure}[b]{0.3\textwidth}
        \centering
        \begin{overpic}[width=\textwidth]{./figures/intramesh.png}
            %\label{}
            \put(41,47){\huge\(\T_{h,i}\)}
        \end{overpic}
    \end{subfigure}
    \hfill
    \begin{subfigure}[b]{0.3\textwidth}
        \centering
        \begin{overpic}[width=\textwidth]{./figures/extramesh.png}
            %\label{}
            \put(70,80){\huge\(\T_{h,e}\)}
        \end{overpic}
    \end{subfigure}
    \hfill
    \begin{subfigure}[b]{0.3\textwidth}
        \centering
        \begin{overpic}[width=\textwidth]{./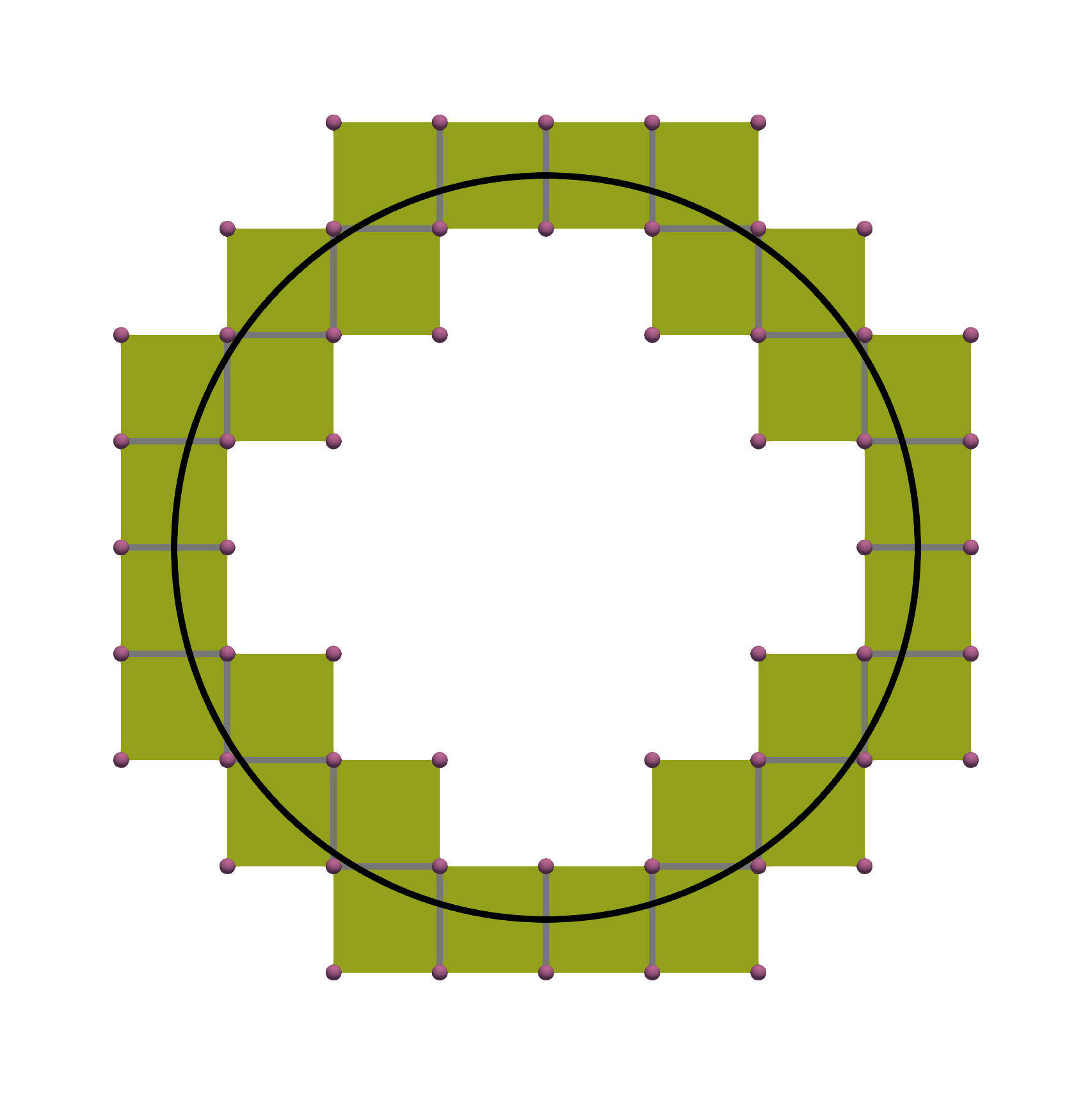}
            %\label{}
            \put(42,45){\huge\(\T_{h,\Gamma}\)}
            \put(58,55){\huge\vector(1,1){10}}
        \end{overpic}
    \end{subfigure}
    \caption{Illustration of the computational domains for the multi-dimensional discretization with the (left) intracellular, (middle) extracellular and (right) interface computational domains.}%
    \label{fig:mesh_multi_theory}%
\end{figure}%
As finite element spaces, we employ first-order continuous piecewise linear elements for the potentials and constant linear elements for the current,
\begin{equation*}
    V_{h,i} = \mathbb{P}^1_\mathrm{c}(\T_{h,i}), \quad V_{h,e}=\{v \in \mathbb{P}^1_\mathrm{c}(\T_{h,e}) : v\vert_{\partial \Omega}=0\}, \quad Q_h =  \mathbb{P}^0_\mathrm{dc}(\T_{h,\Gamma}),
\end{equation*}
and as before, we set \(V_h =  V_{h,i} \times  V_{h,e}\).
We emphasize that the finite element space for the current is defined
on the active background mesh \(\T_{h,\Gamma}\) and not on \(\Gamma\) only.

A preliminary and unstabilized
unfitted multi-dimensional finite element formulation of the EMI PDEs
would be: find \((u_h,I_{m,h}) \in V_h \times Q_h\) such that
\begin{subequations}
    \label{eq:finite_formulation_saddle}
    \begin{alignat}{2}
        a(u_h,v_h) + b(v_h, I_{m,h})                             & = 0,                &  & \forall v_h\in V_h,      \\
        b(u_h,j_{m,h}) -\frac{\Delta t}{C_m} c(I_{m,h} ,j_{m,h}) & = l(j_{m,h}), \quad &  & \forall j_{m,h} \in Q_h,
    \end{alignat}
\end{subequations}
with total bilinear form
\begin{align*}
    A(u_h,I_{h};v_h,j_{h}) & = a(u_h,v_h) + b(v_h,I_{h}) + b(u_h, j_{h}) - \frac{\Delta t}{C_m} c(I_{h},j_{h}).
\end{align*}

In order to render this discretization scheme both stable and geometrically robust,
we will add two stabilization terms. First, as for the single-dimensional formulation, we add the ghost penalty \(g_h\) defined
in~\eqref{eq:ghost_single} to ensure that the presence of small cut elements does not
lead to solely negligible contributions to total bilinear form \(a\). Otherwise, the condition number of the system matrix would be highly dependent on the particular cut
configuration.
Second, in the spirit of~\cite{BurmanHansbo2009,Burman2013}, we
define a stabilization operator
\begin{align*}
    s_h(I_{h},j_{h}) = \phi \sum_{F \in \F_{h,\Gamma}} ([I_{h}],[j_{h}])_F,
\end{align*}
where
\begin{align*}
    \phi= \max\bigg\{\frac{\Delta t}{C_m},h\bigg\},
\end{align*}
which ensures that the unperturbed saddle point problem with $\Delta t = 0$
satisfies an inf-sup condition.

This results in the stabilized bilinear form,
\begin{align}
    A_h(u_h,I_{h};v_h,j_{h}) & = A(u_h,I_{h};v_h,j_{h}) + g_h(u_h,v_h) -s_h(I_{h},j_{h}),
\end{align}
and final stabilized CutFEM discretization for the multi-dimensional primal formulation:
find \((u_h,I_{m,h}) \in V_h \times Q_h\) such that
\begin{align}
    \label{eq:multi_cutfem}
    A_h(u_h,I_{m,h};v_h,j_{m,h}) & = l(j_{m,h})
\end{align}
for all \((v_h,j_{m,h}) \in V_h \times Q_h\).

\section{Unfitted discretizations of the interface ODE step}
\label{sec:ode}

When solving the EMI model on a fitted mesh, the ODE-system is
naturally solved on each of the interface mesh vertices, degrees of
freedom or quadrature
points~\cite{tveito2017cell}. For an unfitted
mesh, there are no longer any mesh points on the interface. To
overcome this problem, we here introduce a new unfitted discretization
of ODEs based on a stabilized mass matrix approach, similar to a
finite element discretization in time of parabolic PDEs.

We begin by introducing a weak formulation based on a
stabilized \(L^2\)-projection problem on the surface. Let \(f\in
L^2(\Gamma)\) be a given function on the surface \(\Gamma\).  For the
multi-dimensional formulation, we defined the active mesh associated
with the cells that are cut by
\(\Gamma\) as \(\T_{h,\Gamma}\). Now we use $\T_{h,\Gamma}$ to introduce the
discrete function space \(Q_{h,\mathrm{ode}} =
\mathbb{P}^1_\mathrm{c}(\T_{h,\Gamma})\) to be the space of piecewise
(bi-)linear continuous functions on \(\T_{h,\Gamma}\). The simple and
natural way to define the \(L^2\)-projection of \(f\) would be: find
\(u_h \in Q_{h,\mathrm{ode}} \) such that
\begin{equation}
    \label{eq:un_projection}
    m_h(u_h,w) = (f,w)_\Gamma \quad \forall w\in Q_{h,\mathrm{ode}},
\end{equation}
where \(m_h(u_h,w) = (u_h,w)_\Gamma\). However, the discrete formulation \eqref{eq:un_projection} suffers from similar issues arising in other unstabilized unfitted formulations. In particular, the mass matrix associated with the problem has potentially large condition numbers
which in addition are highly dependent on how the background mesh and the membrane surface intersect.

To remedy this problem, several different stabilization operators \(s_h\) can be added \cite{burmanCutFiniteElement2018}. The resulting stabilized \(L^2\)-projection is given by: find \(u_h \in V_h \) such that
\begin{equation}
    \label{eq:projection_stable}
    M_h (u_h,v) \equiv  (u_h,w)_\Gamma + s_h(u,w)
    = (f,w)_{\Gamma} \quad \forall w\in V_h .
\end{equation}
In this work, we consider two realizations of $s_h(\cdot,\cdot)$:
a face-based stabilization,
\begin{equation}
    \label{ghost_penalty_time1}
    s^1_h(v,w)=\ \sum_{F\in \F_{\Gamma,h}} \gamma_{b} h_F^{2}([\partial_n v],[\partial_n w])_F,
\end{equation}
and a volume normal gradient stabilization
\begin{equation}
    \label{ghost_penalty_time2}
    s^2_h(v,w)=\gamma_{b} h_F([\partial_{n_{\Gamma}} v],[\partial_{n_{\Gamma}} v])_{\T_{h,\Gamma}},
\end{equation}
both proposed in \cite{burmanCutFiniteElement2018}, where \(\gamma_b\) is some positive parameter. Note that \eqref{ghost_penalty_time1} resembles the stabilization used for the PDE in the previous section. Importantly, note that \(s^1_h\) is only suitable for first-order elements. The second stabilization can be used for higher-order elements but is harder to implement if the normal-field of \(\Gamma\) is not given naturally on a volume mesh.

Now, we can use the stabilized \(L^2\)-projection to solve the ODE system on an unfitted surface.
A weak formulation is obtained by multiplying the time discretized ODEs \eqref{eq:ode_timedisc}
by test functions \(w_1,w_2 \in \T_{h,\Gamma}\), yielding
\begin{alignat}{2}
    (v^{n+1} - v^{n},w_1)_{\Gamma} & = -\Delta t\big(I_{\text{ion}}(v^{n},s^{n}),w_1\big)_{\Gamma}, \\
    (s^{n+1} - s^{n},w_2)_{\Gamma} & = \Delta t\big(F(v^n,s^n),w_2\big)_{\Gamma}.
\end{alignat}
Adding the same stabilization as for the \(L^2\)-projection problem, we arrive at the following system of equations to be solved for each time step \(n=1, \dots,N\),
\begin{alignat}{2}
    M_h(v^{n+1} ,w_1)  & =  m_h\big(v^{n} - \Delta t I_{\text{ion}}(v^{n},s^{n}),w_1\big), \\
    M_h (s^{n+1} ,w_2) & = m_h\big(s^{n} + \Delta t F(v^n,s^n),w_2\big).
\end{alignat}
The initial conditions \(v^{0}\) and \(s^{0}\) also needs to be specified. The natural approach is to use the stabilized \(L^2\)-projection of the initial value such that
\begin{alignat}{2}
    M_h(v^{0},w_1) & = \Delta t m_h(v(t_0)), \\
    M_h(s^{0},w_2) & = \Delta t m_h(s(t_0)).
\end{alignat}
In our numerical experiments we will have the initial conditions given as explicit expressions, and we will therefore rather use interpolation to define the initial value.

\section{Numerical results}
\label{sec:numerical}

We here present numerical experiments using the methods introduced to discretize the EMI model. First, we evaluate the convergence order and geometrical robustness for the PDE- and ODE-steps separately. Next, we perform a convergence study for the full EMI model with an analytical solution. Finally, we run simulations with the coupled EMI Hodgkin-Huxley model.

\subsection{Implementation}
All numerical experiments have been implemented in the Julia-based~\cite{bezansonJuliaFreshApproach2017} finite element framework Gridap~\cite{badiaGridapExtensibleFinite2020, Verdugo2022}. Gridap includes a high-level domain-specific interface in which the weak formulations can be written in near mathematical notation. The plugin GridapEmbedded~\cite{GridapEmbeddedgithub} allows for the use of unfitted meshes, with the boundary of the physical domain defined as a level set function. In addition, the package STLCutters~\cite{STLcuttersgithub} has been used to instead represent the boundary as an STL surface. In all experiments, we set the penalty parameter for both the PDE-step and ODE-step \(\gamma = \gamma_b = 0.1\).

\subsection{Multi-dimensional CutFEM discretization converges at optimal order}
\label{section:experiments_spatial}

We consider a convergence study for the multi-dimensional CutFEM discretization of problem \eqref{spatial_emi}. For numerical studies concerning the single-dimensional formulation, we refer to~\cite{jiUnfittedFiniteElement2017,liHighOrderInterfacepenalty2022}. We use the method of manufactured solutions, and include additional source terms $f_i, f_e$ on the right-hand sides of the first two equations, respectively, in~\eqref{spatial_emi} and consider the potentially non-homogeneous boundary condition $u = u_{bc}$ on $\partial \Omega$.
Generally, we define the intracellular domain based on a level set function \(\varphi\), and the extracellular to be the remaining part of the total domain.
\begin{align}
    \Omega_i = \{(x,y) \in \R^2 \mid \varphi(x,y)<0 \}, \quad \Omega_e=\Omega\setminus\Omega_i.
\end{align}

To begin, we define a total domain by
\(\Omega_1=[-1.8,1.8]\times[-2.05,1.55]\), and let the level set
function be given by
\begin{align}
    \varphi_1(x,y)= x^2+y^2+y\sin((x+1)^2) - 1.5.
    \label{eq:level_set1}
\end{align}
We construct a manufactured solution by setting
\begin{align}
    u_i(x,y) & = \frac{1}{\sigma_i} \sin(0.5\pi x) \cos(0.5\pi y),          \\
    u_e(x,y) & = \frac{1}{\sigma_e} \sin(0.5\pi x) \cos(0.5\pi y),          \\
    I_m(x,y) & = \nabla( \sin(0.5\pi x) \cos(0.5\pi y)) \cdot \mathbf{n_e},
\end{align}
where \(\mathbf{n_e}\) is the inward normal of \(\Omega_i\), and set \(\Delta t=0.2, C_m=1 , \sigma_i =1.5 , \sigma_e=1 \).
The problem is solved on a uniform mesh consisting of \(N^2\) uniform square elements, for which we gradually decrease the element size \(h\) by setting \(N=2^{2+n}\), with \(n=2,\dots,6\). For each mesh refinement level \(n\), the error \(E_n\) between the analytical solution \(u\) and the numerical solution \(u_n\) is calculated in a chosen norm (to be further specified),
\begin{align*}
    E_n = \norm{u-u_n} = \norm{e_u},
\end{align*}
and the experimental order of convergence (EOC) is calculated via
\begin{align*}
    \text{EOC} = \frac{\log(E_{n-1}/E_n)}{\log(h_{n-1}/h_n)}.
\end{align*}

The resulting errors and convergence rates are given in \cref{table:exp1_multi}. We see that the convergence rates are optimal, giving first-order convergence in the \(H^1\)-semi-norm and second-order convergence in the \(L^2\)-norm for \(u\).  The solution for \(N=256\) is depicted in \cref{fig:spatial_solution}.
\begin{table}
    \begin{center}
        \footnotesize
        \caption{Error and convergence rates for the CutFEM multi-dimensional formulation in the spatial convergence study.}
        \label{table:exp1_multi}
        \begin{tabular}{l*{1}{S[table-format=1.2e-2]}*{1}{S[table-format=-1.2]}*{1}{S[table-format=1.2e-2]}*{1}{S[table-format=-1.2]}*{1}{S[table-format=1.2e-2]}*{1}{S[table-format=-1.2]}}
            \hline
            {N} & {\(\norm{e_u}_{\Omega_1\cup\Omega_2}\)} & {EOC} & {\(\abs{e_u}_{1,\Omega_1\cup\Omega_2}\)} & {EOC} & {\(\norm{e_{I_m}}_{\Gamma}\)} & {EOC} \\
            \hline
            16  & 3.42e-02                                &       & 4.84e-01                                 &       & 4.37e-01                      &       \\
            32  & 8.62e-03                                & 1.99  & 2.32e-01                                 & 1.06  & 2.11e-01                      & 1.05  \\
            64  & 2.18e-03                                & 1.98  & 1.14e-01                                 & 1.03  & 1.01e-01                      & 1.06  \\
            128 & 5.34e-04                                & 2.03  & 5.63e-02                                 & 1.02  & 4.51e-02                      & 1.17  \\
            256 & 1.36e-04                                & 1.98  & 2.81e-02                                 & 1.01  & 2.09e-02                      & 1.11  \\
            \bottomrule
        \end{tabular}
    \end{center}
\end{table}
\begin{figure}
    \centering
    \includegraphics[width=0.3\textwidth]{./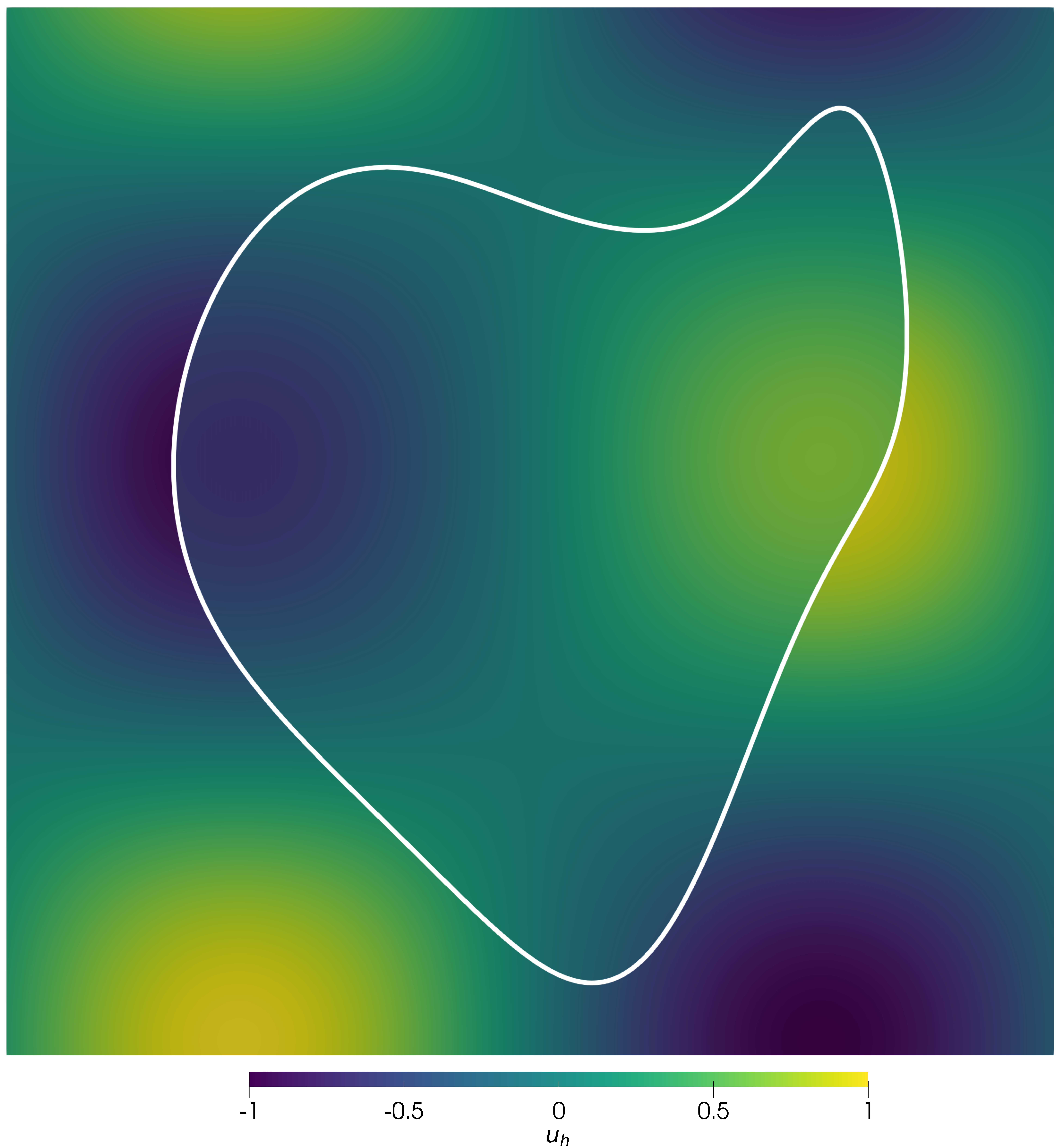}
    \caption{Solution plot for the CutFEM formulations solved on a mesh with \(256\times256\) elements.}
    \label{fig:spatial_solution}
\end{figure}

\subsection{CutFEM condition numbers rely on stabilization for robustness}
Next, we perform a sensitivity study to check how different cut configurations affect the condition number of the stiffness matrix related to the problem. The stiffness matrix \(\mathcal{A}\) based on the discrete form \(A_h\) is defined by the relation
\begin{equation*}
    (\mathcal{A}V,W)_{\R^N} = A_h(v,w) \quad \forall v,w\in V_h.
\end{equation*}
The condition number of the stiffness matrix is defined by
\begin{equation*}
    \kappa(\mathcal{A})=\norm{\mathcal{A}}_{\R^N} \norm{\mathcal{A}^{-1}}_{\R^N},
\end{equation*}
with the corresponding norm defined by
\begin{equation*}
    \norm{\mathcal{A}}_{\R^N}= \sup_{v\in\R^n \setminus \mathbf{0}} \frac{\norm{\mathcal{A}V}_{\R^N}}{\norm{V}_{\R^N}}.
\end{equation*}
Here, we compute the condition numbers with the \texttt{cond method} provided in Julia, with the \(2\)-norm. By repeatedly moving \(\Omega_i\) while assembling the stiffness matrices and computing the associated condition number, we evaluate how sensitive the condition number of the different formulations (non-stabilized versus stabilized) is to the configuration of the cut.

Specifically, let \(\Gamma_\delta\) be a circle with radius \(0.5\), and the center of the circle for step \(m\) be defined by
\begin{align}
    S=\delta\left(\frac{1}{N},\frac{1}{N}\right), \quad \delta =\frac{m}{M_\delta},
\end{align}
where \(M_\delta\) is the number of steps, and \(N\) is the mesh size. We set \(N=32, M_\delta=500, \Delta t = 0.5, \sigma_e = 2, \sigma_1 = 1, C_m=1.\) \cref{fig:sensitivity}(a) shows how the circular interface is moved from the first to the last iteration. The corresponding condition numbers (\cref{fig:sensitivity} (b)) demonstrate that the non-stabilized formulation is very sensitive to the particular cut configurations, in contrast to the stabilized formulation where the condition numbers are practically constant.

\begin{figure}
    \captionsetup[subfigure]{justification=justified, singlelinecheck=false}
    \begin{subfigure}[t]{0.26\textwidth}
        \includegraphics[width=\textwidth]{./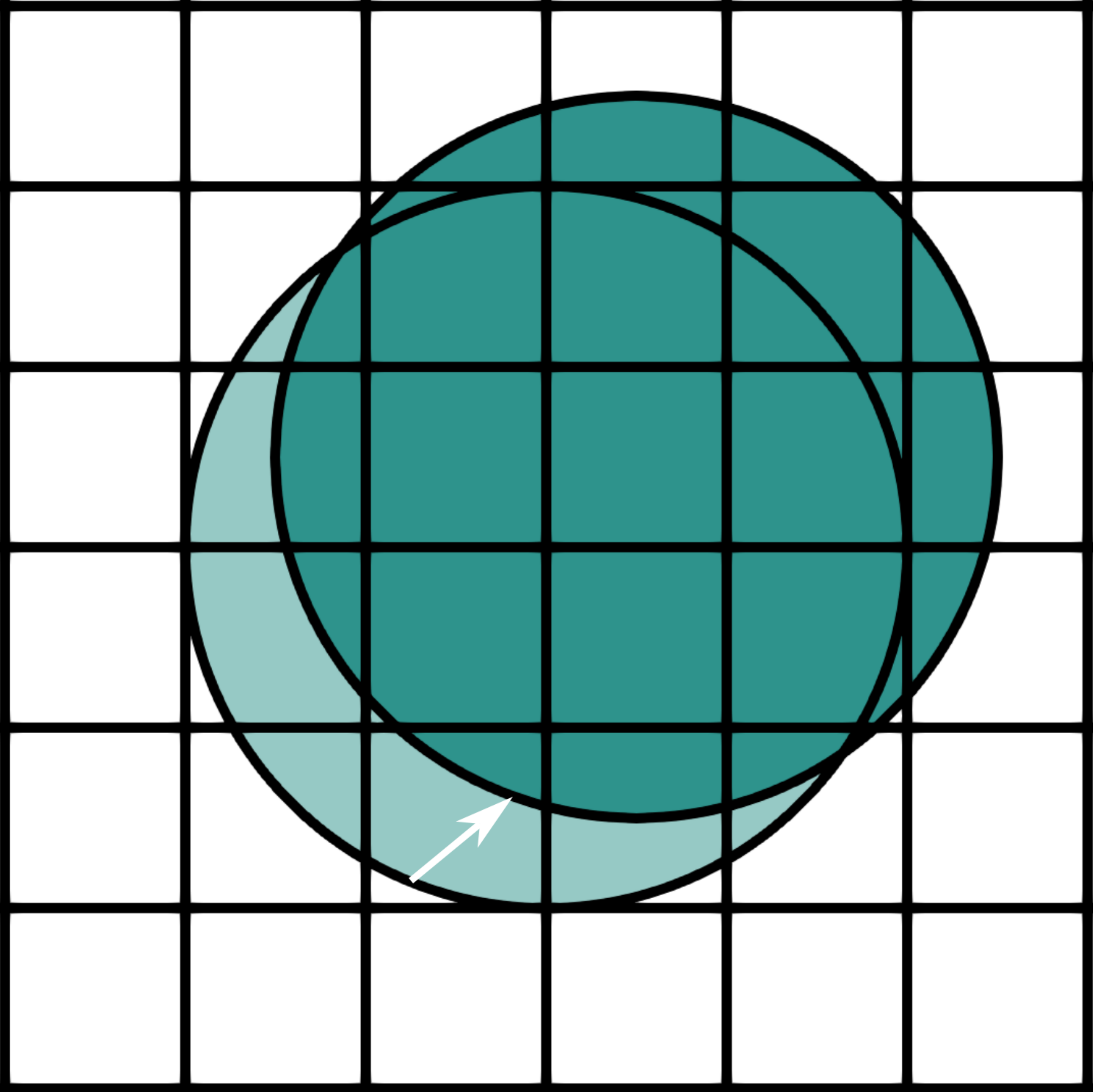}
        \caption{}
    \end{subfigure}
    \hfill
    \begin{subfigure}[t]{0.35\textwidth}
        \includegraphics[width=\textwidth]{./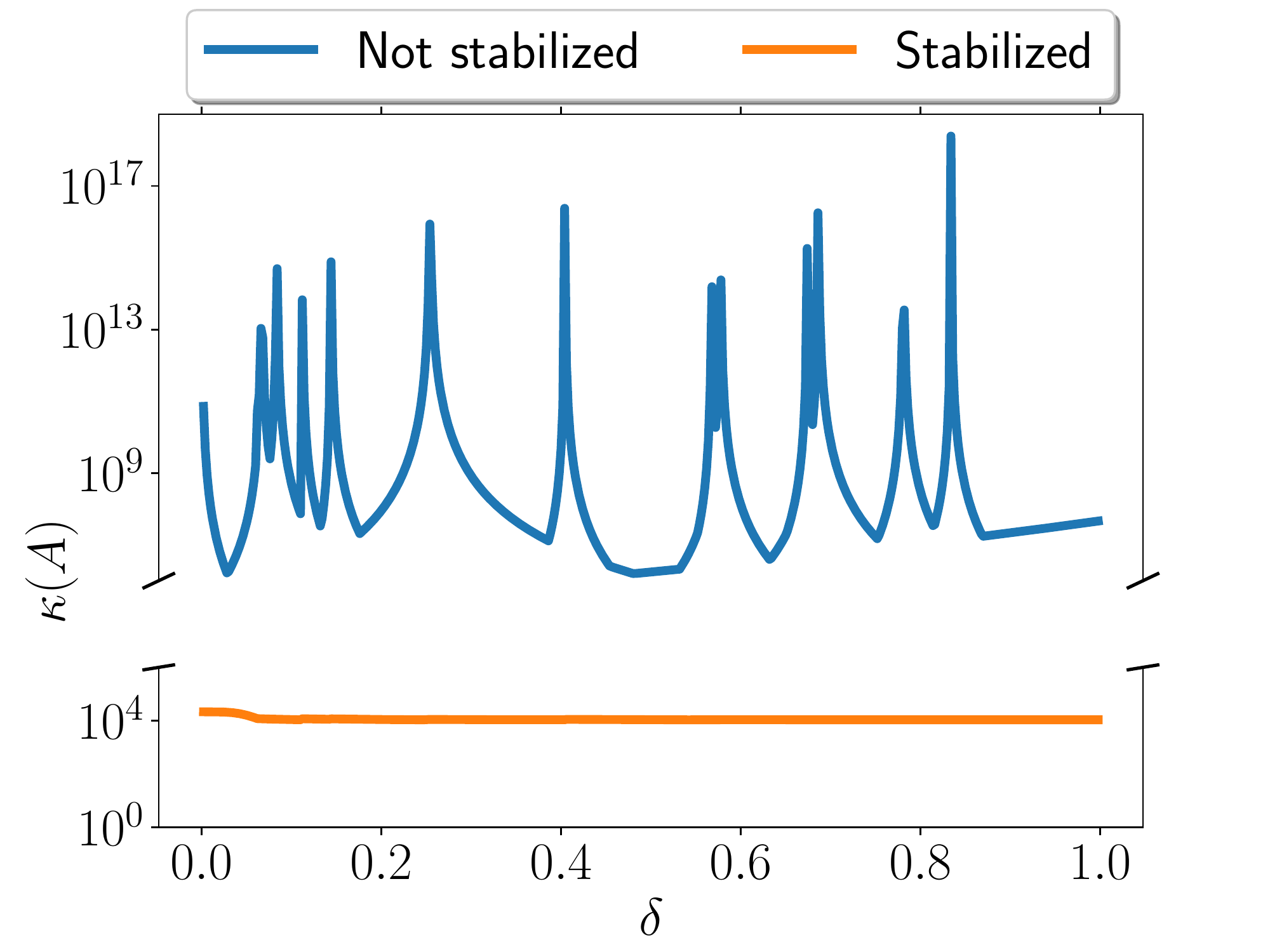}
        \caption{}
    \end{subfigure}
    \hfill
    \begin{subfigure}[t]{0.35\textwidth}
        \includegraphics[width=\textwidth]{./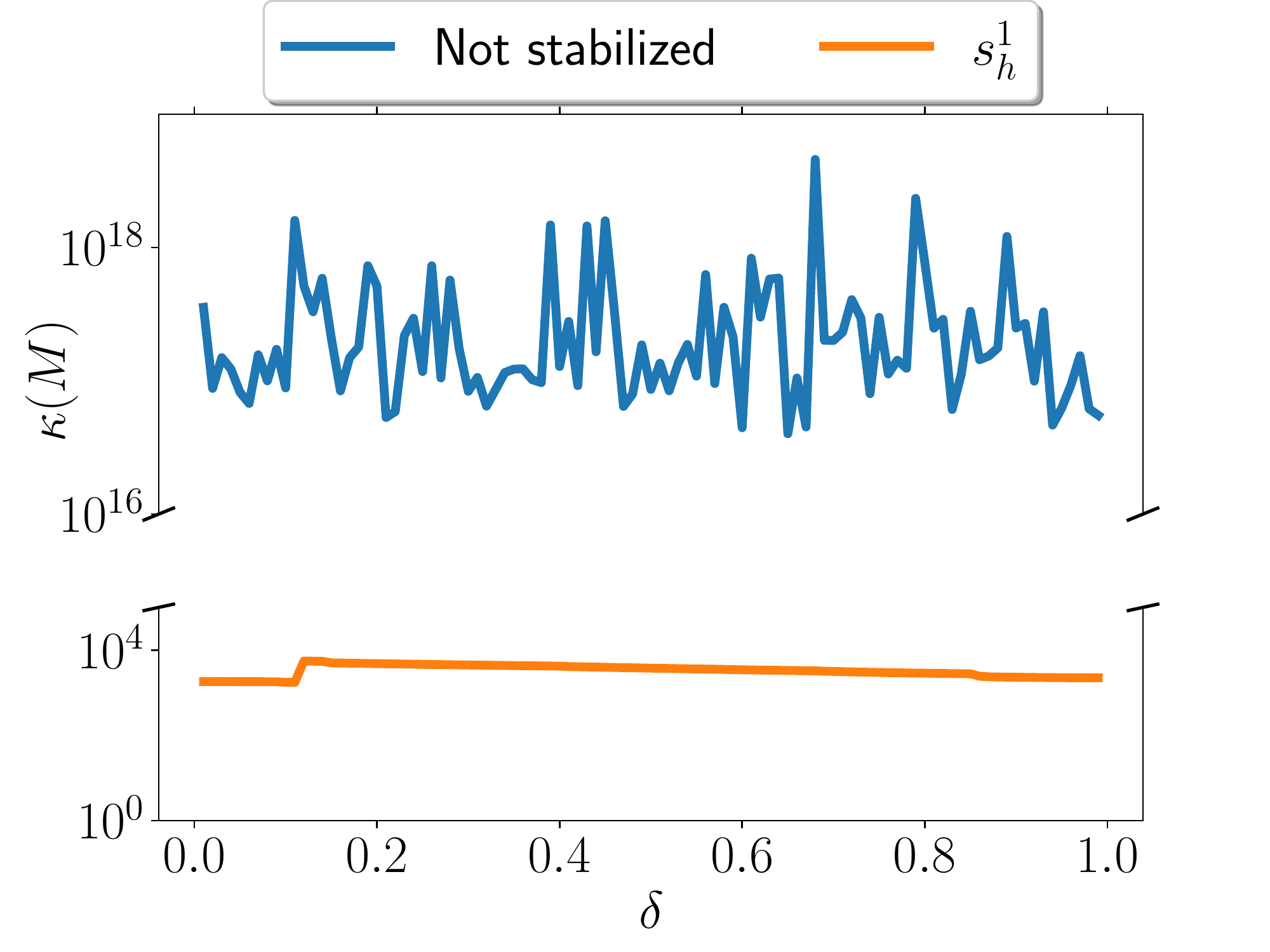}
        \caption{}
    \end{subfigure}
    \caption{(a) Illustration of how the sensitivity analysis is performed, with the position of the intracellular domain for the first and last iteration. (b-c) The condition numbers of the stiffness matrix for the PDE-step (b) and of the mass matrix for the ODE-step (c) versus the translation parameter \(\delta\), without and with two different stabilization formulations. Note that the scaling of the axes is different. For the unstabilized formulation for the ODE-step, \(\delta= 0.315\) and \(\delta=0.43\) give singular matrices.}
    \label{fig:sensitivity}
\end{figure}

\subsection{Geometrical robustness of the unfitted ODE discretization}

To examine the geometrical robustness of the unfitted discretization
of the ODE steps, we again consider a numerical sensitivity study. We
assemble the mass matrix \(\mathcal{M}\) associated
with~\eqref{eq:projection_stable}, on different cut configurations and
with and without stabilization, and compute the corresponding
condition numbers. Specifically, we set \(N=32, M_\delta=100\) and let
the stabilization terms be as defined in \Cref{sec:ode}. We observe
(\Cref{fig:sensitivity} (c)) that the condition numbers for the
unstabilized formulation are very high, and even that the projection
is singular for some cut configurations. The condition numbers of the
$s^1_h$ formulation are substantially lower, on the order of   %1000 - 5500 
$10^3-10^4$. The condition numbers of the $s^2_h$ formulation are
lower yet again, on the order of \(10 - 10^2\).

\subsection{Unfitted ODE discretization converges at optimal order}
\label{sec:numerics:ode}

Next, we conduct numerical experiments to test the convergence rates for the discretization of ODEs on the surface alone, using the following linear system of ODEs defined over the membrane surface:
\begin{equation*}
    v_t  = -s \qquad
    s_t  = v.
\end{equation*}

We consider a structured background mesh over the domain \(\Omega_2=[-1,1]^3\) consisting of uniform cubes, where \(N^3\) is the total number of elements. The level set function defining the surface is defined by
\begin{equation*}
    \varphi_2(x,y,z) = \frac{x^2}{0.8^2}+y^2+\frac{z^2}{0.9^2}-0.8^2.
\end{equation*}
Over this domain, we define manufactured solutions as
\begin{equation*}
    v(x,y,z,t) = (x^2+y^3+z^2)\cos(t), \qquad
    s(x,y,z,t) = (x^2+y^3+z^2)\sin(t).
\end{equation*}
The system is solved on the time interval \([0,2]\) with the number of
uniform time intervals set to \(M=2^{n+1}\) with \(n=0,\dots,4\) and
\(N=4M\). For ease of implementation, we utilize the
$s^1_h$-stabilization here and in the remainder of the manuscript. To
evaluate the discretization properties, we let the discrete
\(L^{\infty}L^2\)- and (square) \(L^2L^2\)-norms be defined by
\begin{align*}
    \norm{v}_{L^{\infty}L^2(\Gamma)} = \max_{m\in [1,\dots,M]} \norm{v(\cdot,t_m)}_{\Gamma}, \qquad
    \norm{v}^2_{L^2L^2(\Gamma)} = \frac{1}{M} \sum^M_{m=1} \norm{v(\cdot,t_m)}^2_{\Gamma}
\end{align*}

By evaluating the errors for the series of meshes and time step sizes
and estimating the errors of convergence (\cref{table:odetest2}), we
observe that the discretization errors decay at first-order, as expected.

\begin{table}
    \begin{center}
        \caption{Errors and convergence rates for the unfitted ODE discretization under spatial and temporal refinement.}
        \label{table:odetest2}
        \begin{tabular}{l*{1}{S[table-format=1.2e-2]}*{1}{S[table-format=-1.2]}*{1}{S[table-format=1.2e-2]}*{1}{S[table-format=-1.2]}}
            \toprule
            {N/M}  & {\(\norm{e_v}_{L^{\infty}L^2(\Gamma)}\)} & {EOC} & {\(\norm{e_v}_{L^2L^2(\Gamma)}\)} & {EOC} \\
            \midrule
            8/2    & 4.64e-01                                 &       & 5.98e-01                          &       \\
            16/4   & 2.14e-01                                 & 1.12  & 2.19e-01                          & 1.45  \\
            32/8   & 9.01e-02                                 & 1.25  & 9.18e-02                          & 1.25  \\
            64/16  & 4.46e-02                                 & 1.02  & 4.14e-02                          & 1.15  \\
            128/32 & 2.43e-02                                 & 0.87  & 1.95e-02                          & 1.08  \\
            \midrule
            {N/M}  & {\(\norm{e_s}_{L^{\infty}L^2(\Gamma)}\)} & {EOC} & {\(\norm{e_s}_{L^2L^2(\Gamma)}\)} & {EOC} \\
            \midrule
            8/2    & 1.07e+00                                 &       & 1.09e+00                          &       \\
            16/4   & 5.94e-01                                 & 0.85  & 5.15e-01                          & 1.08  \\
            32/8   & 2.72e-01                                 & 1.13  & 2.27e-01                          & 1.18  \\
            64/16  & 1.25e-01                                 & 1.12  & 1.05e-01                          & 1.12  \\
            128/32 & 5.98e-02                                 & 1.07  & 4.99e-02                          & 1.07  \\
            \bottomrule
        \end{tabular}
    \end{center}
\end{table}

\subsection{Convergence analysis of the unfitted coupled EMI discretization}

Next, we perform a series of numerical experiments to validate the whole splitting scheme, where we solve the full EMI system~\eqref{emi} augmented by right-hand-sides $f_i, f_e$ in~\eqref{Emi1} and~\eqref{Emi2}, respectively, and a simplified ODE model with given $P_1, P_2$:
\begin{alignat}{2}
    v_t = \frac{1}{C_m}(I_m - P_1 + s), \qquad s_t & = v + P_2 .
\end{alignat}
We consider the geometrical configuration as in Section~\ref{sec:numerics:ode}. We design the manufactured solution,
\begin{align*}
    u_i & = \frac{1}{\sigma_i}  \sin(\pi x) \cos(\pi y) \exp(0.5 z)\exp(-0.5 t), \\
    u_e & = \frac{1}{\sigma_e}  \sin(\pi x) \cos(\pi y) \exp(0.5 z)\exp(-0.5 t), \\
    s   & = I_m,
\end{align*}
and again refine both the spatial and temporal resolution with \(M =[4,6, 8,12,16]\) and \(N=4M\). To measure the errors, we employ the discrete \(L^{\infty}X\)- and square \(L^2X\)-norms defined by
\begin{equation*}
    \norm{u}_{L^{\infty}X(\Omega)} = \max_{m\in [0,\dots,M]} \norm{u(\cdot,t_m)}_{X,\Omega}, \quad
    \norm{u}^2_{L^{2}X(\Omega)} = \frac{1}{M} \sum^M_{m=0} \norm{u(\cdot,t_m)}^2_{X,\Omega},
\end{equation*}
where \(X\) is either the \(L^2\)- or the \(H^1\)- norm. Norms and
rates of convergence are listed in \cref{table:pdetest2_max_single}
for the single-dimensional formulation, and in \cref{table:pdetest2_max_multi} for the multi-dimensional formulation. We observe first-order convergence for \(u\), \(v\), \(s\) and \(I_m\) in all relevant norms, which is as expected in light of the first-order splitting scheme.
\begin{sidewaystable}[htbp!]
    \small
    \begin{center}
        \caption{Errors and convergence rates under spatial and temporal refinement with the single-dimensional formulation.}
        \label{table:pdetest2_max_single}
        \begin{tabular}{l*{1}{S[table-format=1.2e-2]}*{1}{S[table-format=-1.2]}*{1}{S[table-format=1.2e-2]}*{1}{S[table-format=-1.2]}*{1}{S[table-format=1.2e-2]}*{1}{S[table-format=-1.2]}*{1}{S[table-format=1.2e-2]}*{1}{S[table-format=-1.2]}}
            \toprule
            {N/M} & {\(\norm{e_u}_{L^{\infty}L^2(\Omega)}\)} & {EOC} & {\(\norm{e_u}_{L^{\infty}H^1(\Omega)}\)} & {EOC} & {\(\norm{e_v}_{L^{\infty}L^2(\Gamma)}\)} & {EOC} & {\(\norm{e_s}_{L^{\infty}L^2(\Gamma)}\)} & {EOC} \\
            \midrule
            16/4  & 2.94e-02                                 &       & 4.13e-01                                 &       & 7.84e-02                                 &       & 2.51e-01                                 &       \\
            24/6  & 1.89e-02                                 & 1.09  & 2.87e-01                                 & 0.90  & 5.55e-02                                 & 0.85  & 1.45e-01                                 & 1.36  \\
            32/8  & 1.40e-02                                 & 1.05  & 2.20e-01                                 & 0.92  & 4.30e-02                                 & 0.89  & 9.94e-02                                 & 1.30  \\
            48/12 & 9.28e-03                                 & 1.01  & 1.50e-01                                 & 0.95  & 2.98e-02                                 & 0.90  & 6.08e-02                                 & 1.21  \\
            64/16 & 6.96e-03                                 & 1.00  & 1.14e-01                                 & 0.96  & 2.28e-02                                 & 0.94  & 4.36e-02                                 & 1.15  \\
            \bottomrule
        \end{tabular} \\
        \vspace{1em}
        \label{table:pdetest2_l2_single}
        \begin{tabular}{l*{1}{S[table-format=1.2e-2]}*{1}{S[table-format=-1.2]}*{1}{S[table-format=1.2e-2]}*{1}{S[table-format=-1.2]}*{1}{S[table-format=1.2e-2]}*{1}{S[table-format=-1.2]}*{1}{S[table-format=1.2e-2]}*{1}{S[table-format=-1.2]}}
            {N/M} & {\(\norm{e_u}_{L^{2}L^2(\Omega)}\)} & {EOC} & {\(\norm{e_u}_{L^{2}H^1(\Omega)}\)} & {EOC} & {\(\norm{e_v}_{L^{2}L^2(\Gamma)}\)} & {EOC} & {\(\norm{e_s}_{L^{2}L^2(\Gamma)}\)} & {EOC} \\
            \
            16/4  & 2.75e-02                            &       & 3.55e-01                            &       & 6.98e-02                            &       & 2.13e-01                            &       \\
            24/6  & 1.69e-02                            & 1.20  & 2.44e-01                            & 0.93  & 4.82e-02                            & 0.91  & 1.13e-01                            & 1.57  \\
            32/8  & 1.23e-02                            & 1.11  & 1.86e-01                            & 0.94  & 3.71e-02                            & 0.91  & 7.29e-02                            & 1.51  \\
            48/12 & 7.98e-03                            & 1.07  & 1.26e-01                            & 0.96  & 2.54e-02                            & 0.94  & 4.15e-02                            & 1.39  \\
            64/16 & 5.92e-03                            & 1.04  & 9.50e-02                            & 0.97  & 1.93e-02                            & 0.95  & 2.86e-02                            & 1.30  \\
            \bottomrule
        \end{tabular}

        \caption{Errors and convergence rates under spatial and temporal refinement with the multi-dimensional formulation.}
        \label{table:pdetest2_max_multi}
        \begin{tabular}{l*{1}{S[table-format=1.2e-2]}*{1}{S[table-format=-1.2]}*{1}{S[table-format=1.2e-2]}*{1}{S[table-format=-1.2]}*{1}{S[table-format=1.2e-2]}*{1}{S[table-format=-1.2]}*{1}{S[table-format=1.2e-2]}*{1}{S[table-format=-1.2]}*{1}{S[table-format=1.2e-2]}*{1}{S[table-format=-1.2]}}
            {N/M} & {\(\norm{e_u}_{L^{\infty}L^2(\Omega)}\)} & {EOC} & {\(\norm{e_u}_{L^{\infty}H^1(\Omega)}\)} & {EOC} & {\(\norm{e_v}_{L^{\infty}L^2(\Gamma)}\)} & {EOC} & {\(\norm{e_s}_{L^{\infty}L^2(\Gamma)}\)} & {EOC} & {\(\norm{e_{I_m}}_{L^{\infty}L^2(\Gamma)}\)} & {EOC} \\
            \midrule
            16/4  & 2.64e-02                                 &       & 4.20e-01                                 &       & 8.66e-02                                 &       & 2.36e-01                                 &       & 4.46e-01                                     &       \\
            24/6  & 1.92e-02                                 & 0.79  & 2.90e-01                                 & 0.91  & 6.35e-02                                 & 0.76  & 1.08e-01                                 & 1.93  & 3.07e-01                                     & 0.92  \\
            32/8  & 1.30e-02                                 & 1.37  & 2.22e-01                                 & 0.94  & 4.39e-02                                 & 1.29  & 8.08e-02                                 & 1.00  & 2.37e-01                                     & 0.91  \\
            48/12 & 8.48e-03                                 & 1.04  & 1.50e-01                                 & 0.95  & 2.92e-02                                 & 1.01  & 4.85e-02                                 & 1.26  & 1.60e-01                                     & 0.97  \\
            64/16 & 6.23e-03                                 & 1.07  & 1.14e-01                                 & 0.97  & 2.16e-02                                 & 1.05  & 3.35e-02                                 & 1.28  & 1.19e-01                                     & 1.01  \\
            \bottomrule
        \end{tabular} \\
        \vspace{1em}
        \begin{tabular}{l*{1}{S[table-format=1.2e-2]}*{1}{S[table-format=-1.2]}*{1}{S[table-format=1.2e-2]}*{1}{S[table-format=-1.2]}*{1}{S[table-format=1.2e-2]}*{1}{S[table-format=-1.2]}*{1}{S[table-format=1.2e-2]}*{1}{S[table-format=-1.2]}*{1}{S[table-format=1.2e-2]}*{1}{S[table-format=-1.2]}}
            {N/M} & {\(\norm{e_u}_{L^{2}L^2(\Omega)}\)} & {EOC} & {\(\norm{e_u}_{L^{2}H^1(\Omega)}\)} & {EOC} & {\(\norm{e_v}_{L^{2}L^2(\Gamma)}\)} & {EOC} & {\(\norm{e_s}_{L^{2}L^2(\Gamma)}\)} & {EOC} & {\(\norm{e_{I_m}}_{L^{2}L^2(\Gamma)}\)} & {EOC} \\
            \midrule
            16/4  & 2.50e-02                            &       & 3.70e-01                            &       & 7.81e-02                            &       & 2.03e-01                            &       & 4.23e-01                                &       \\
            24/6  & 1.73e-02                            & 0.90  & 2.52e-01                            & 0.95  & 5.59e-02                            & 0.82  & 9.18e-02                            & 1.96  & 2.87e-01                                & 0.96  \\
            32/8  & 1.15e-02                            & 1.43  & 1.89e-01                            & 0.99  & 3.83e-02                            & 1.32  & 7.20e-02                            & 0.84  & 2.16e-01                                & 0.99  \\
            48/12 & 7.36e-03                            & 1.10  & 1.27e-01                            & 0.98  & 2.52e-02                            & 1.03  & 4.07e-02                            & 1.41  & 1.45e-01                                & 0.98  \\
            64/16 & 5.34e-03                            & 1.11  & 9.58e-02                            & 0.99  & 1.85e-02                            & 1.07  & 2.63e-02                            & 1.52  & 1.08e-01                                & 1.03  \\
            \bottomrule
        \end{tabular}
    \end{center}
\end{sidewaystable}

\subsection{Neuronal action potentials and local field potential on unfitted geometries}

We finally consider simulating the neuronal potential and the
surrounding extracellular potential (local field potential) triggered
by a sufficiently large stimulus. The neuron-ECS interface is defined
by a specific neuronal reconstruction\footnote{Data set with ID
    NMO\_76781~\cite{jongbloetsStagespecificFunctionsSemaphorin7A2017}
    from
    \href{neuromorpho.org}{neuromorpho.org}\cite{ascoliNeuroMorphoOrgCentral2007}.}
meshed to an STL format~\cite{morschelGeneratingNeuronGeometries2017}. We define
$\Omega_i$ by the interior of this reconstructed surface, and $\Omega$
by its bounding-box extended by $\pm 8 \si{\micro m}$ in each direction. As
usual, $\Omega_e = \Omega \setminus \Omega_i$.
We use the Hodgkin-Huxley membrane model as described in \Cref{Section:model}, and set all parameters as defined in \cref{table:hh_parameters}. To induce an action potential, we apply a stimulus in the time interval \([0, 0.5]\) in an upper part of the neuron (the part of the membrane which is inside the domain \(D = \{ (x, y, z) \in \Omega_i : y > 100 \si{\micro m}\} \)), illustrated in figure \cref{fig:neuron}. We consider the multi-dimensional formulation of the PDE step, and the $s^1_h$-stabilization of the ODE formulation, and divide the domain into square elements (\(108\times146\times36\)).

\iffalse
    \begin{figure}
        \begin{subfigure}[b]{0.3\textwidth}
            \centering
            \includegraphics[width=\textwidth]{./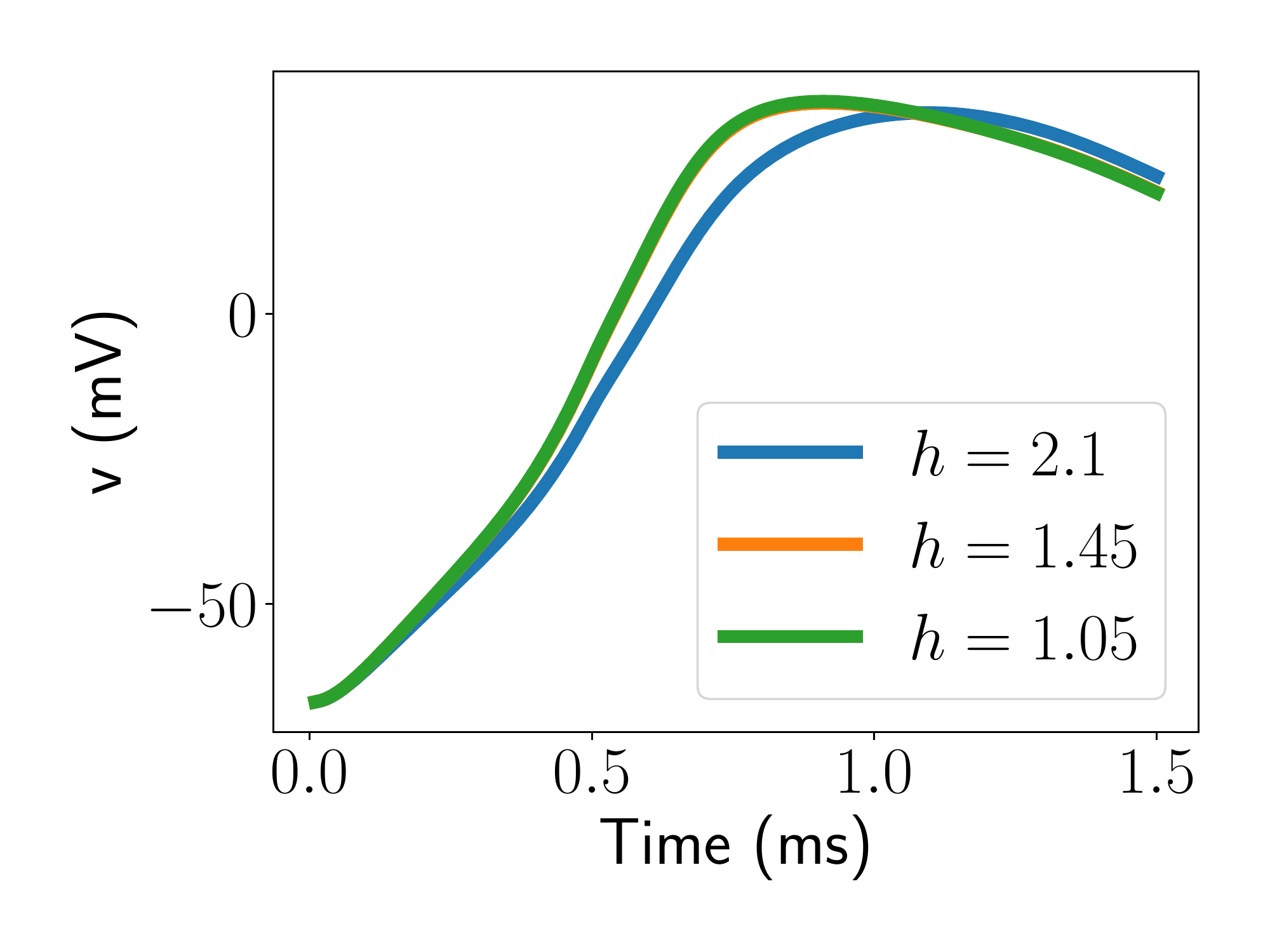}
            \caption{}
        \end{subfigure}
        \hfill
        \begin{subfigure}[b]{0.3\textwidth}
            \centering
            \includegraphics[width=\textwidth]{./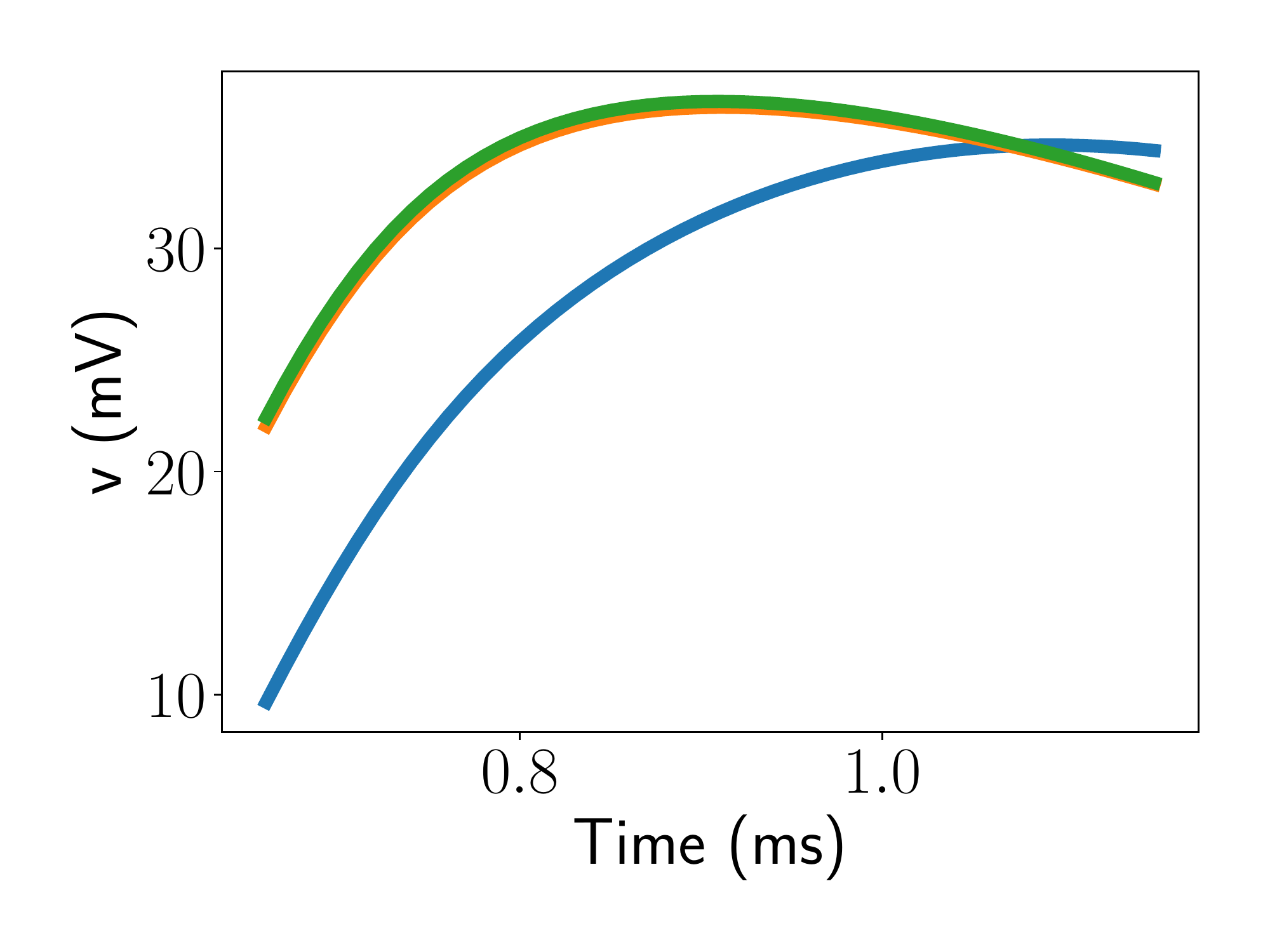}
            \caption{}
        \end{subfigure}
        \hfill
        \begin{subfigure}[b]{0.3\textwidth}
            \centering
            \includegraphics[width=\textwidth]{./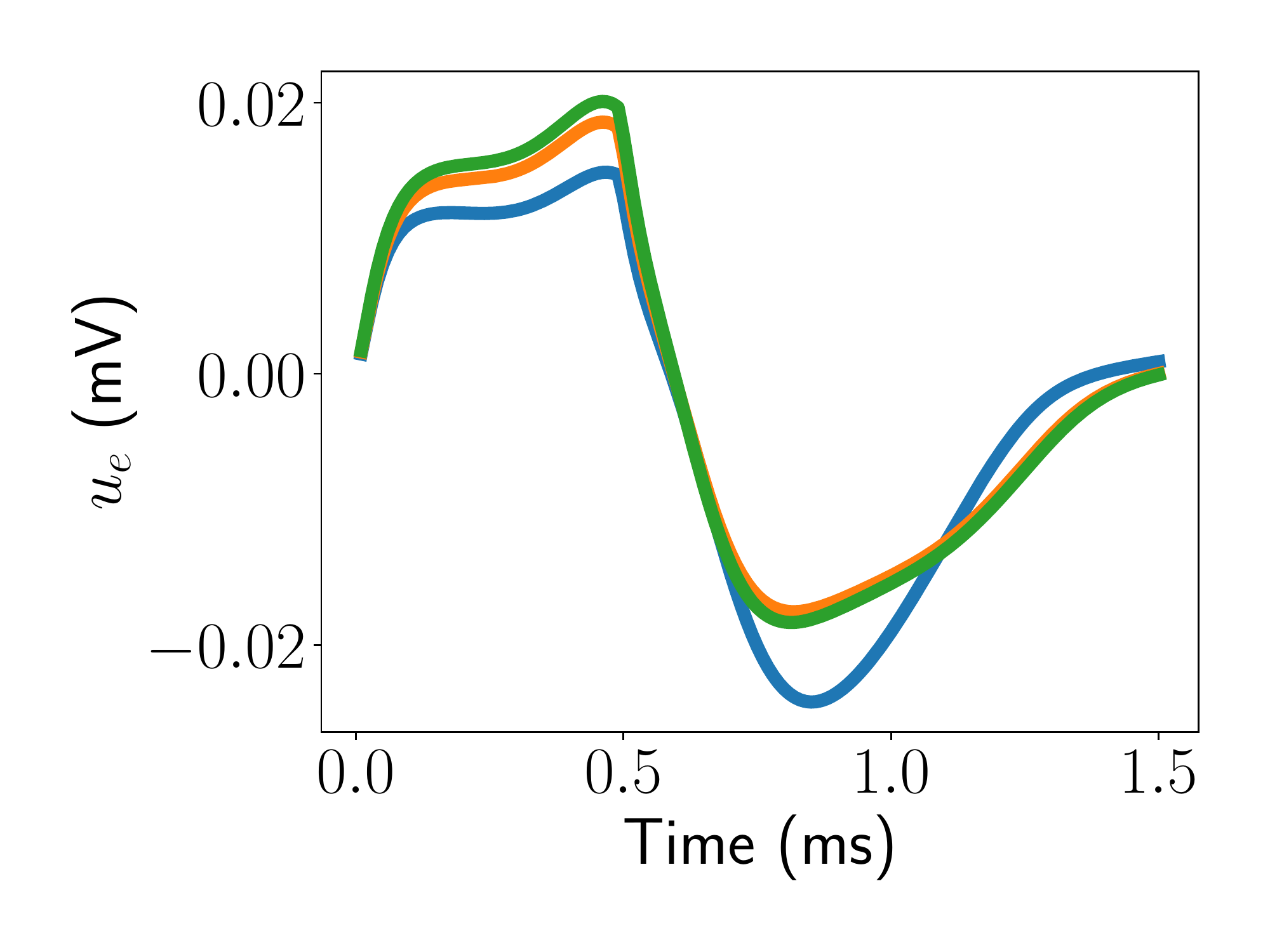}
            \caption{}
        \end{subfigure}
        \hfill
        \caption{Traces of the neuronal membrane potential (a) and (b,
            zoom) and extracellular potential (c) for different spatial
            resolutions evaluated at the point \(\mathbf{x}=(-10.118,72,2.583)\) for the membrane potential
            and \(\mathbf{x} = (-10.4,72,2.6)\) for the extracellular potential.}
        \label{fig:hhconv}
    \end{figure}
\fi

The resulting approximations for the membrane, intra and extracellular potentials in three points located close to each other are shown in \cref{fig:neuron}, together with snapshots of the membrane and extracellular potentials. We observe that the cell is depolarized before it is repolarized.

\begin{figure}
    \captionsetup[subfigure]{justification=justified, singlelinecheck=false}
    \begin{subfigure}[t]{0.3\textwidth}
        \centering
        \includegraphics[width=\textwidth]{./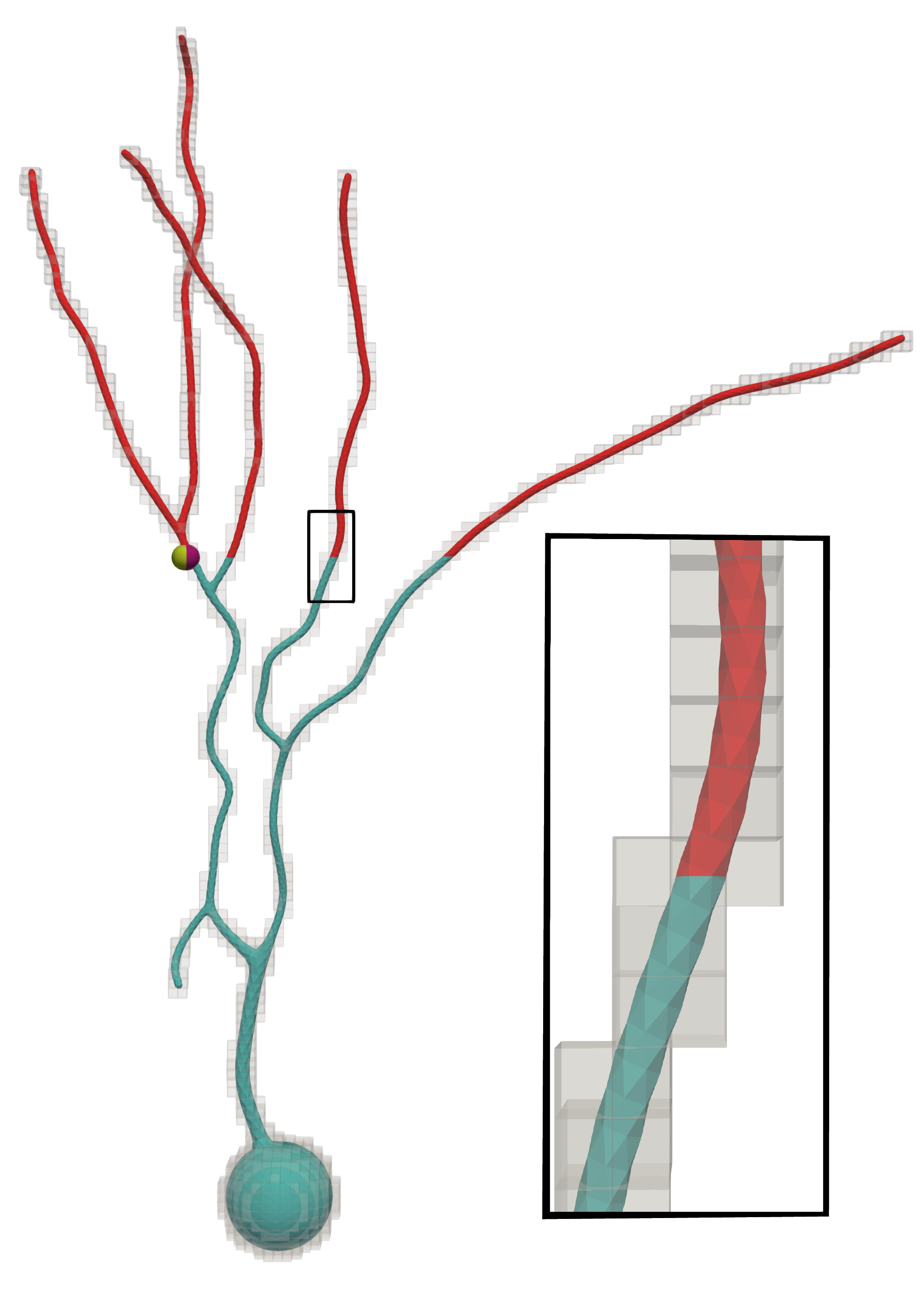}
        \caption{}
    \end{subfigure}
    \hfill
    \begin{subfigure}[t]{0.33\textwidth}
        \centering
        \includegraphics[width=\textwidth]{./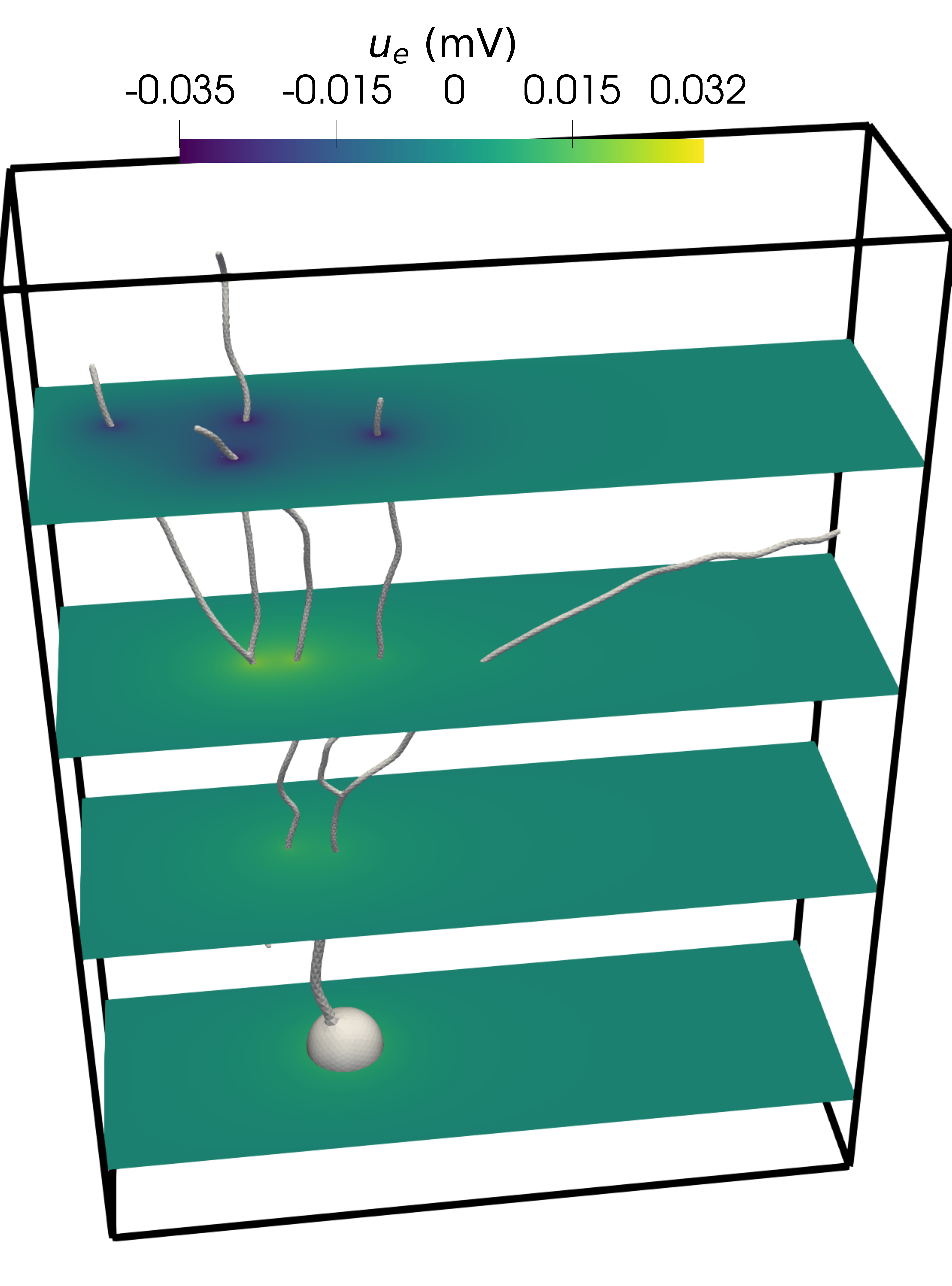}
        \caption[l]{}
    \end{subfigure}
    \hfill
    \begin{subfigure}[t]{0.33\textwidth}
        \centering
        \includegraphics[width=\textwidth]{./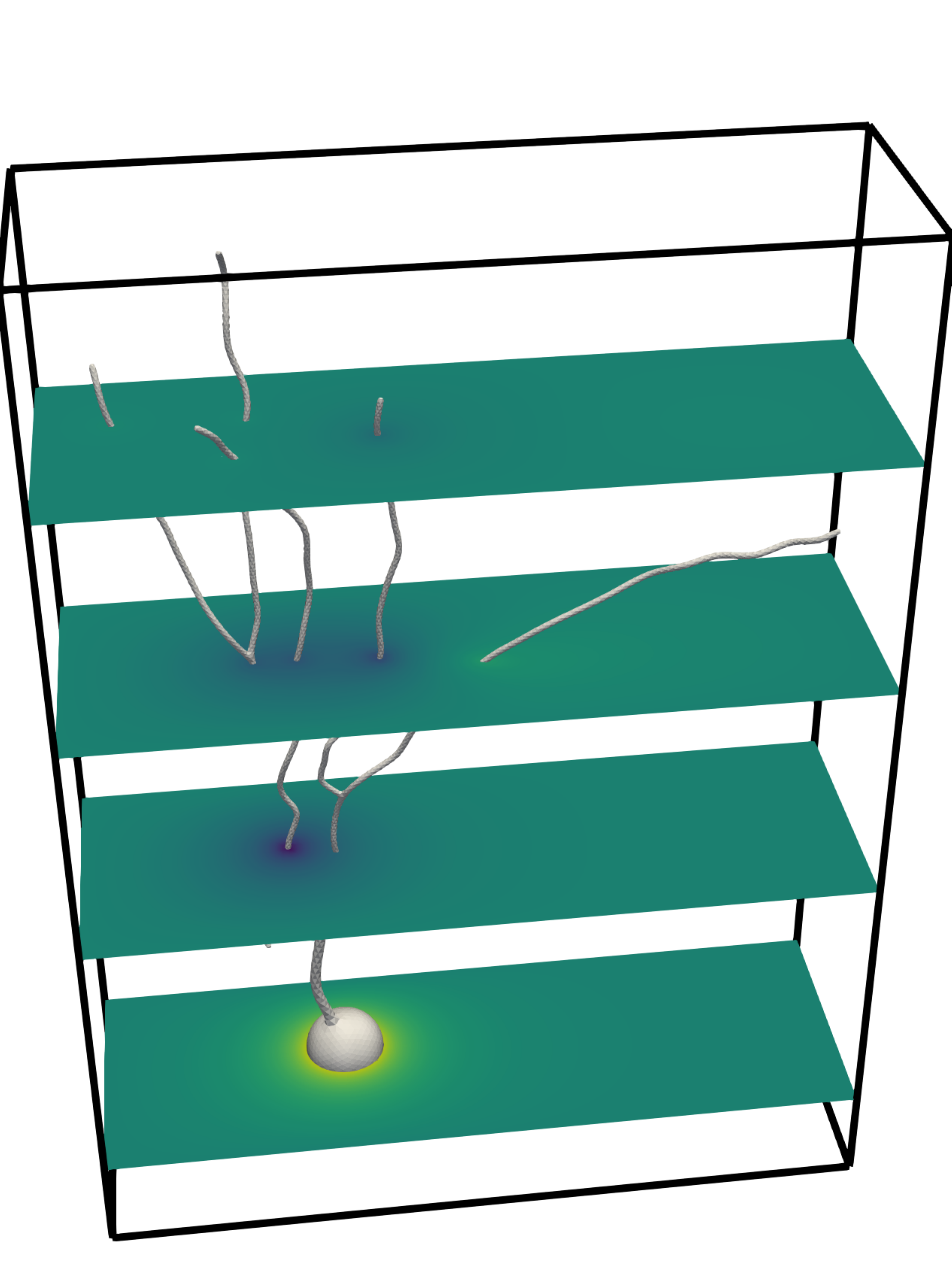}
        \caption[l]{}
    \end{subfigure}
    \begin{subfigure}[t]{\textwidth}
        \centering
        \includegraphics[width=0.24\textwidth]{./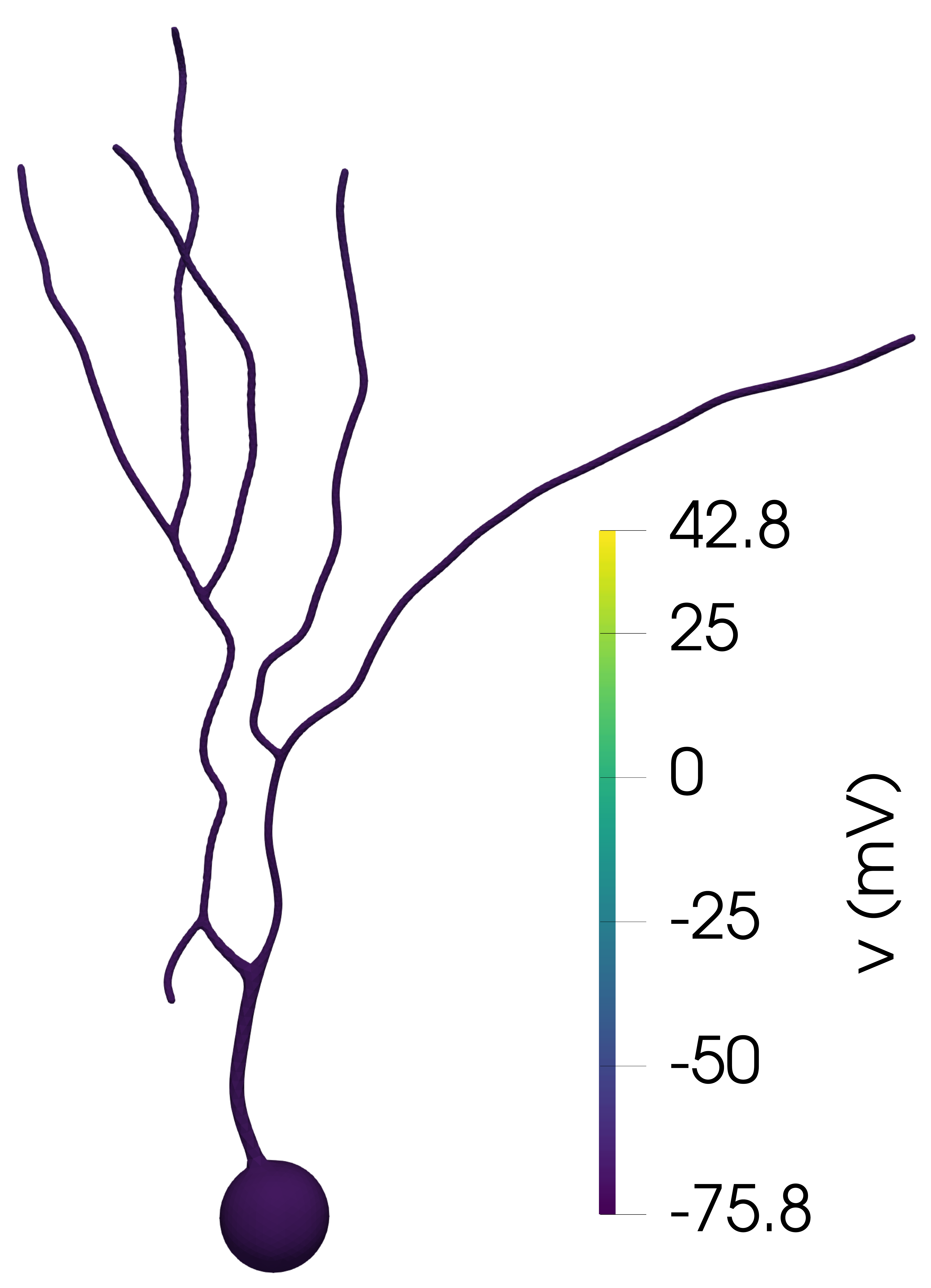}
        \includegraphics[width=0.24\textwidth]{./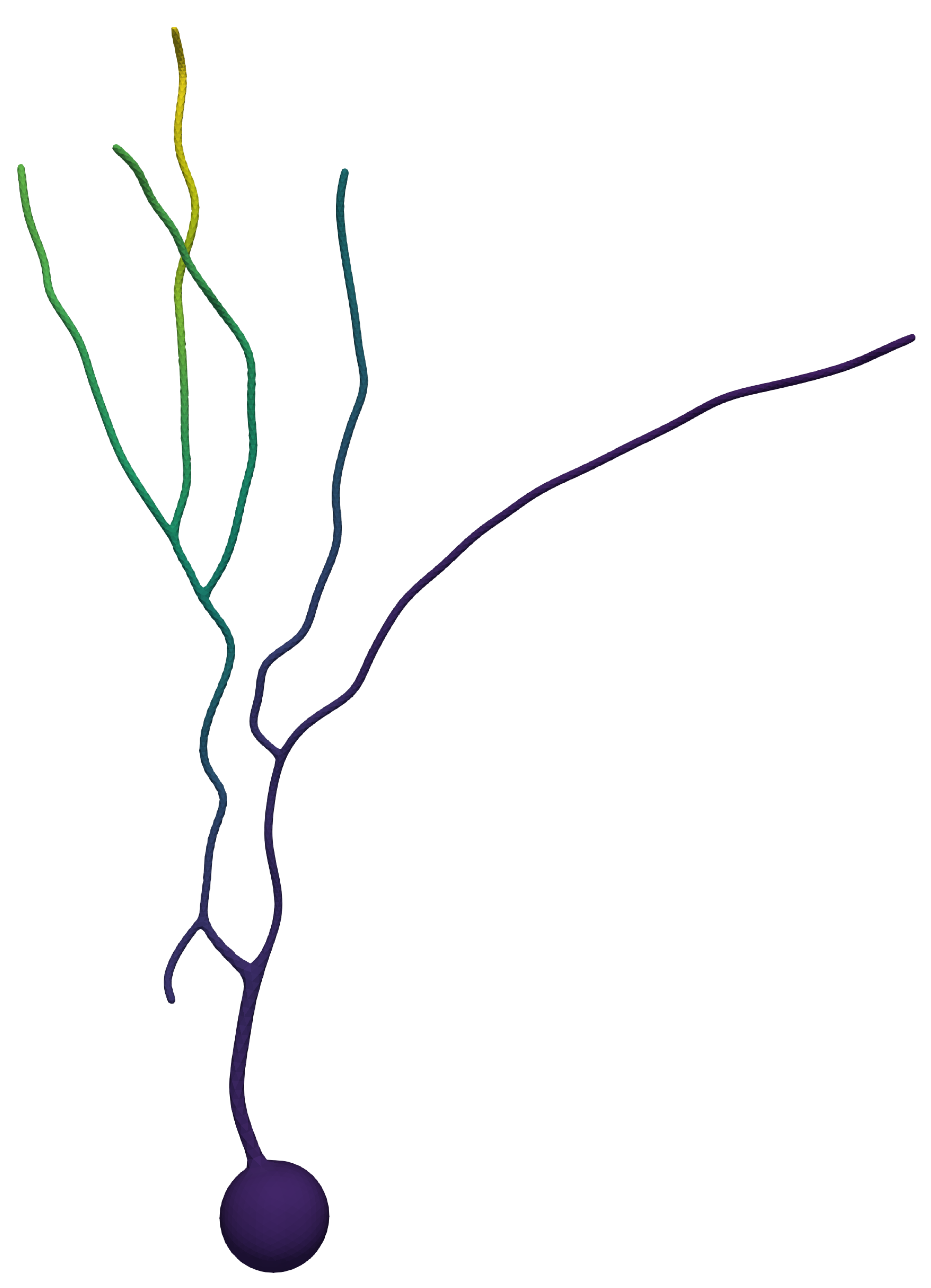}
        \includegraphics[width=0.24\textwidth]{./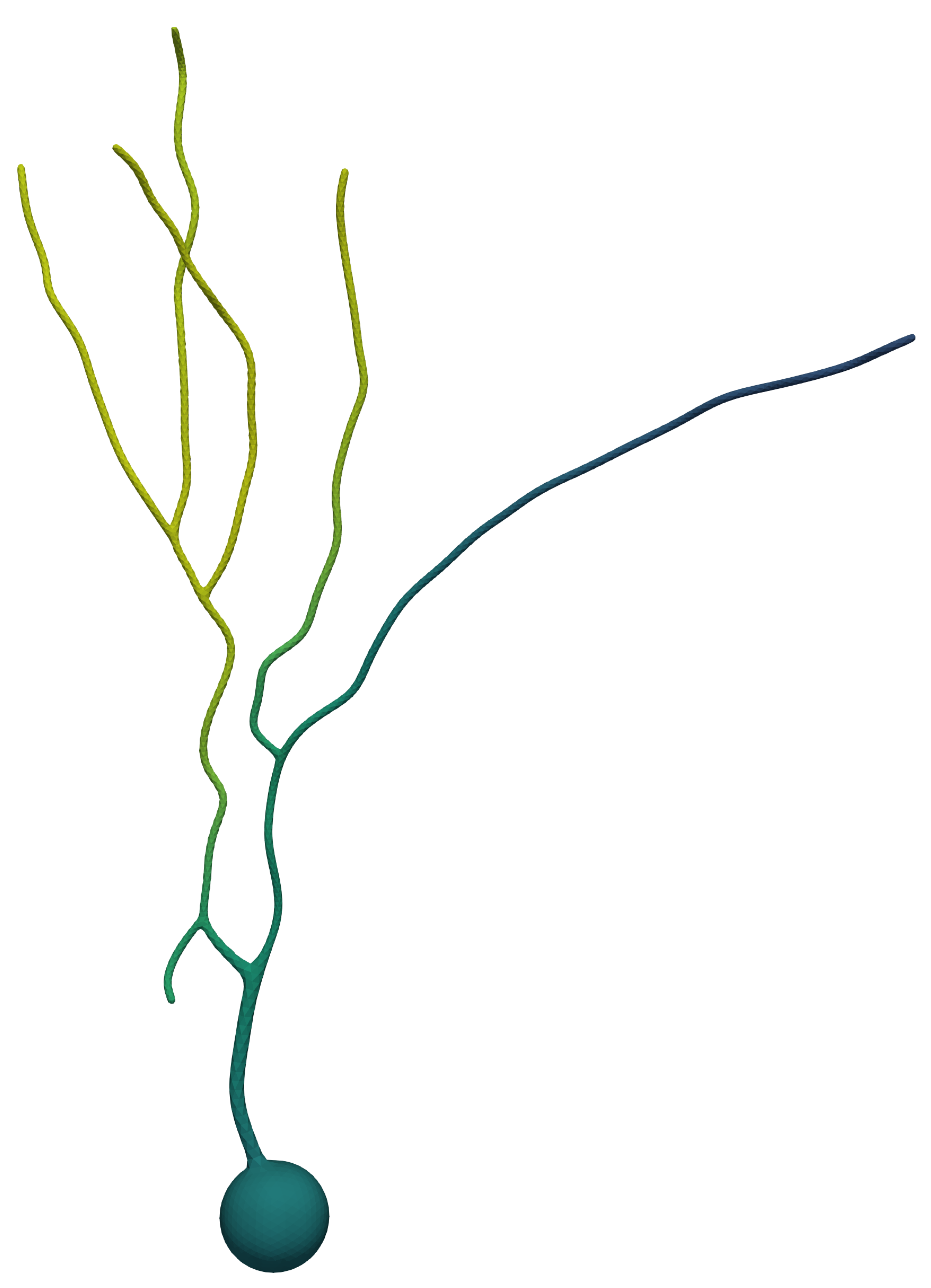}
        \includegraphics[width=0.24\textwidth]{./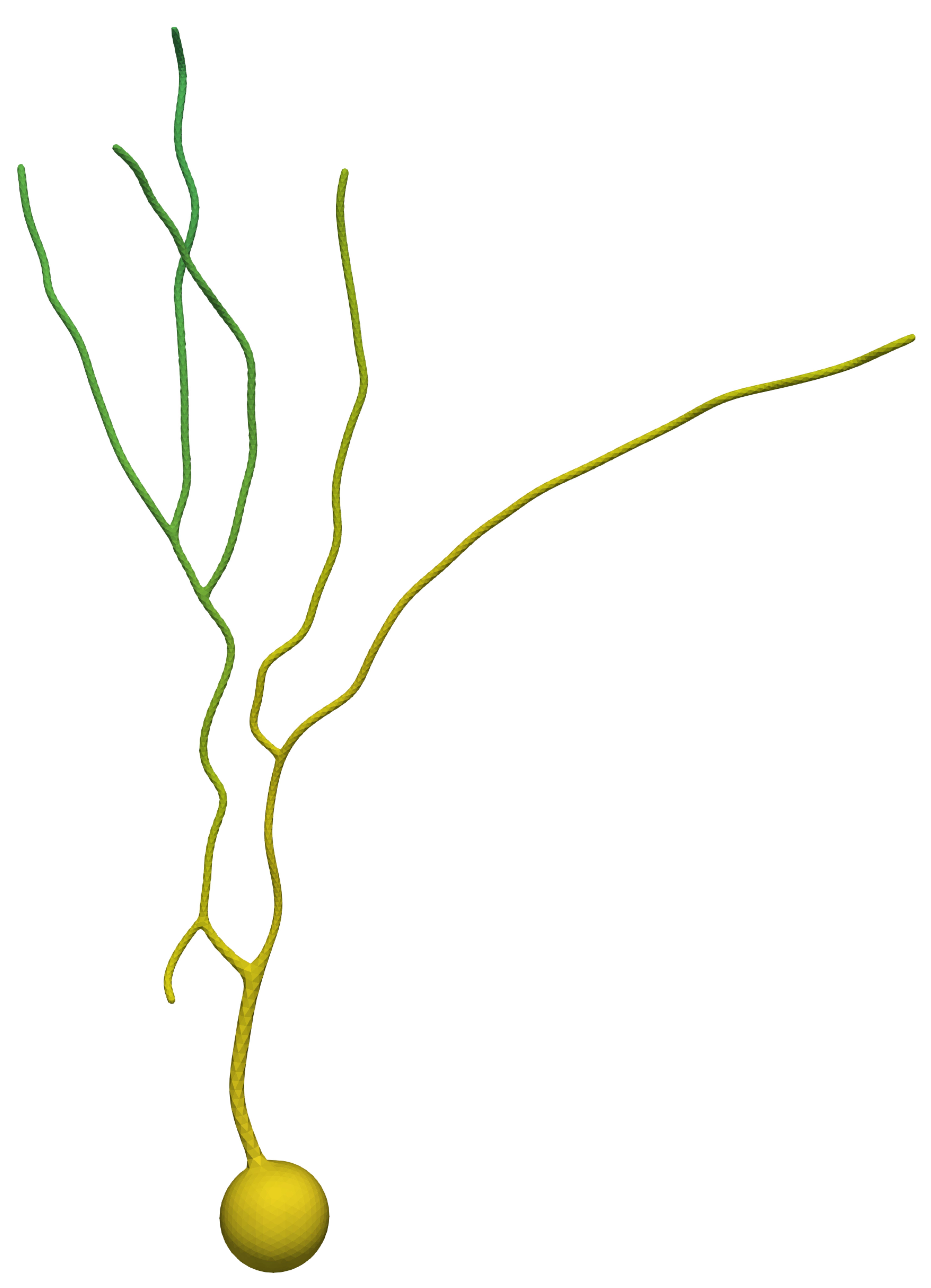}
        \caption{}
    \end{subfigure}
    \begin{subfigure}[t]{0.32\textwidth}
        \includegraphics[width=\textwidth]{./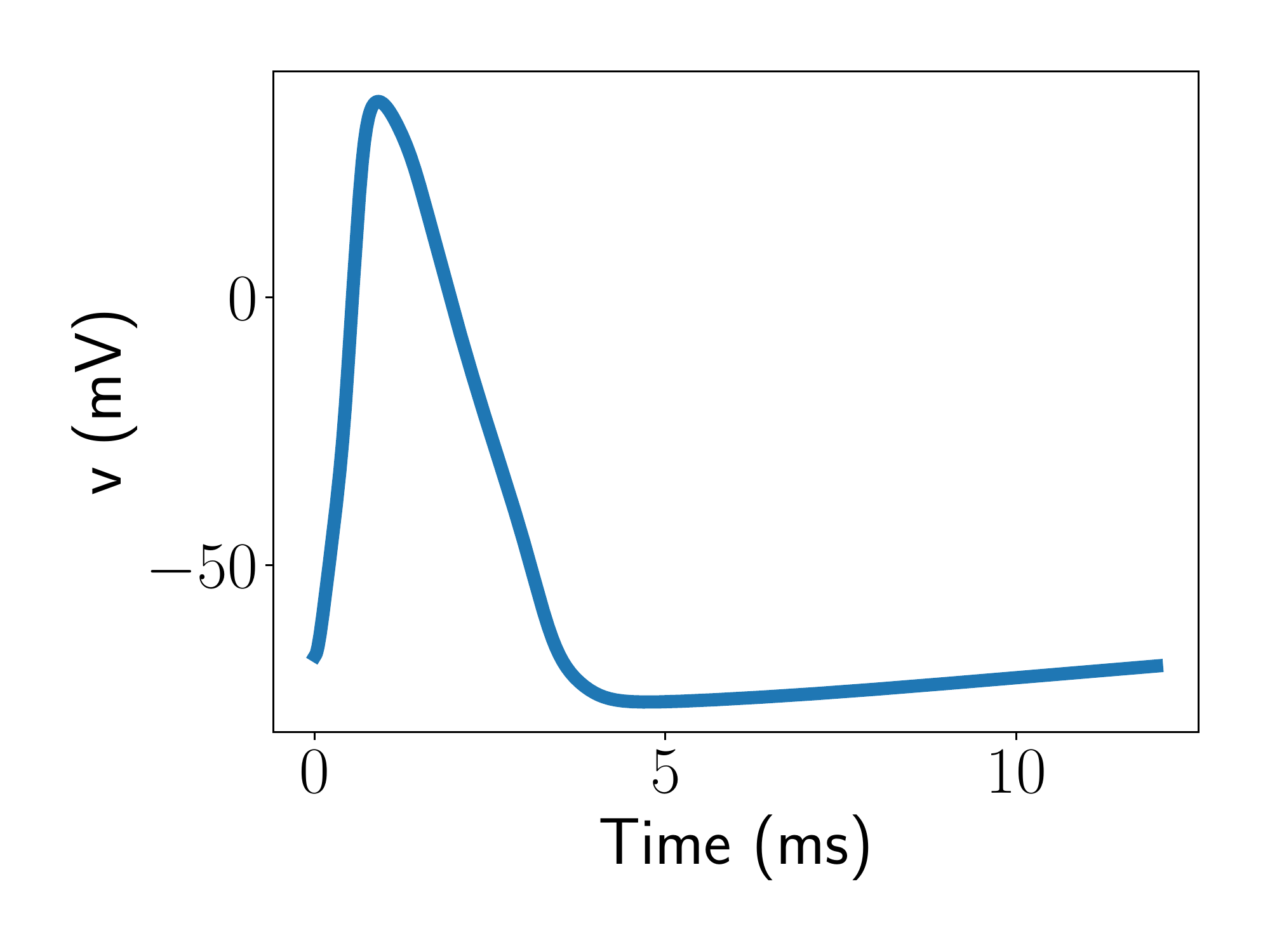}
        \caption{}
    \end{subfigure}
    \hfill
    \begin{subfigure}[t]{0.32\textwidth}
        \includegraphics[width=\textwidth]{./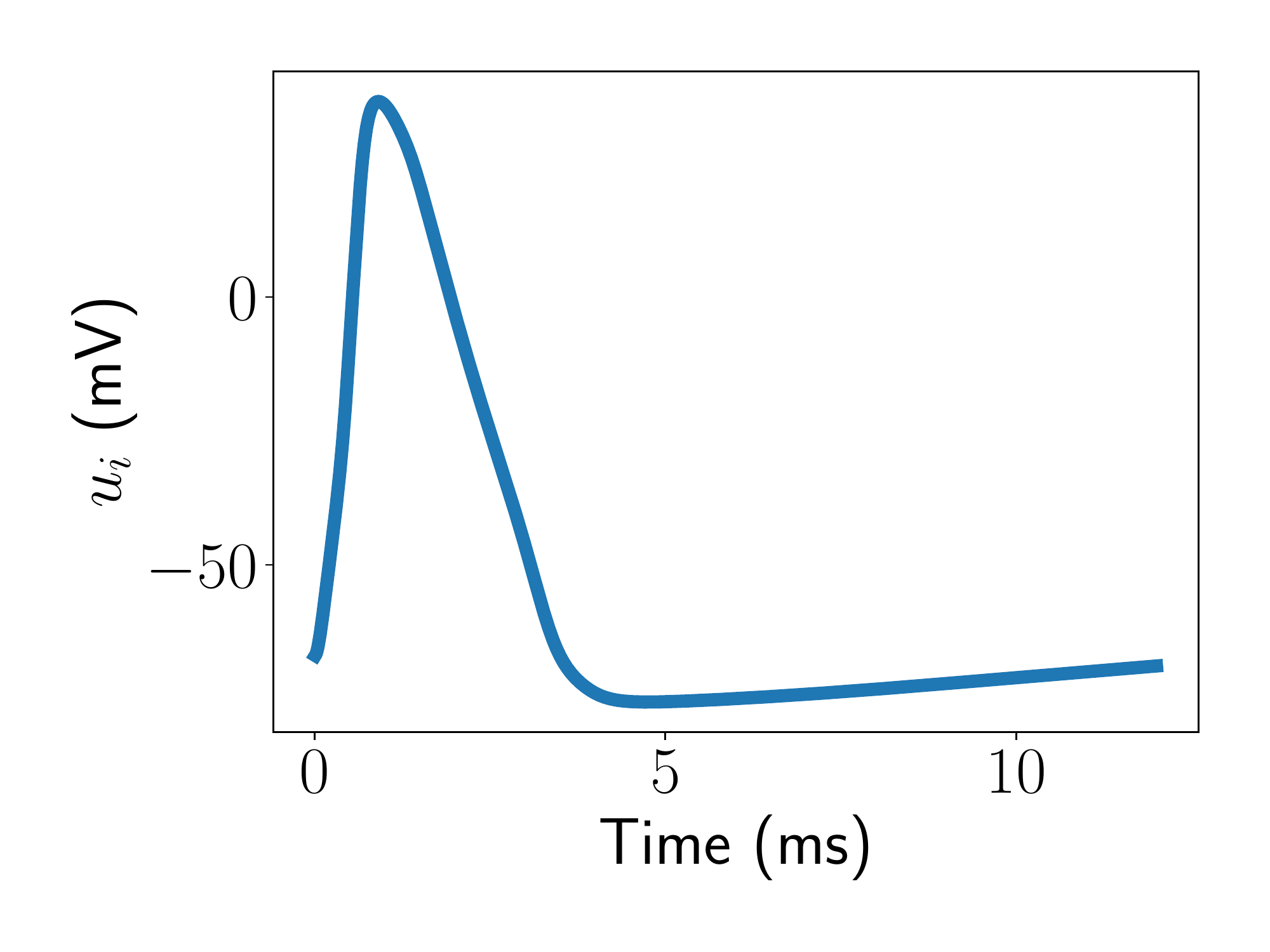}
        \caption{}
    \end{subfigure}
    \hfill
    \begin{subfigure}[t]{0.32\textwidth}
        \includegraphics[width=\textwidth]{./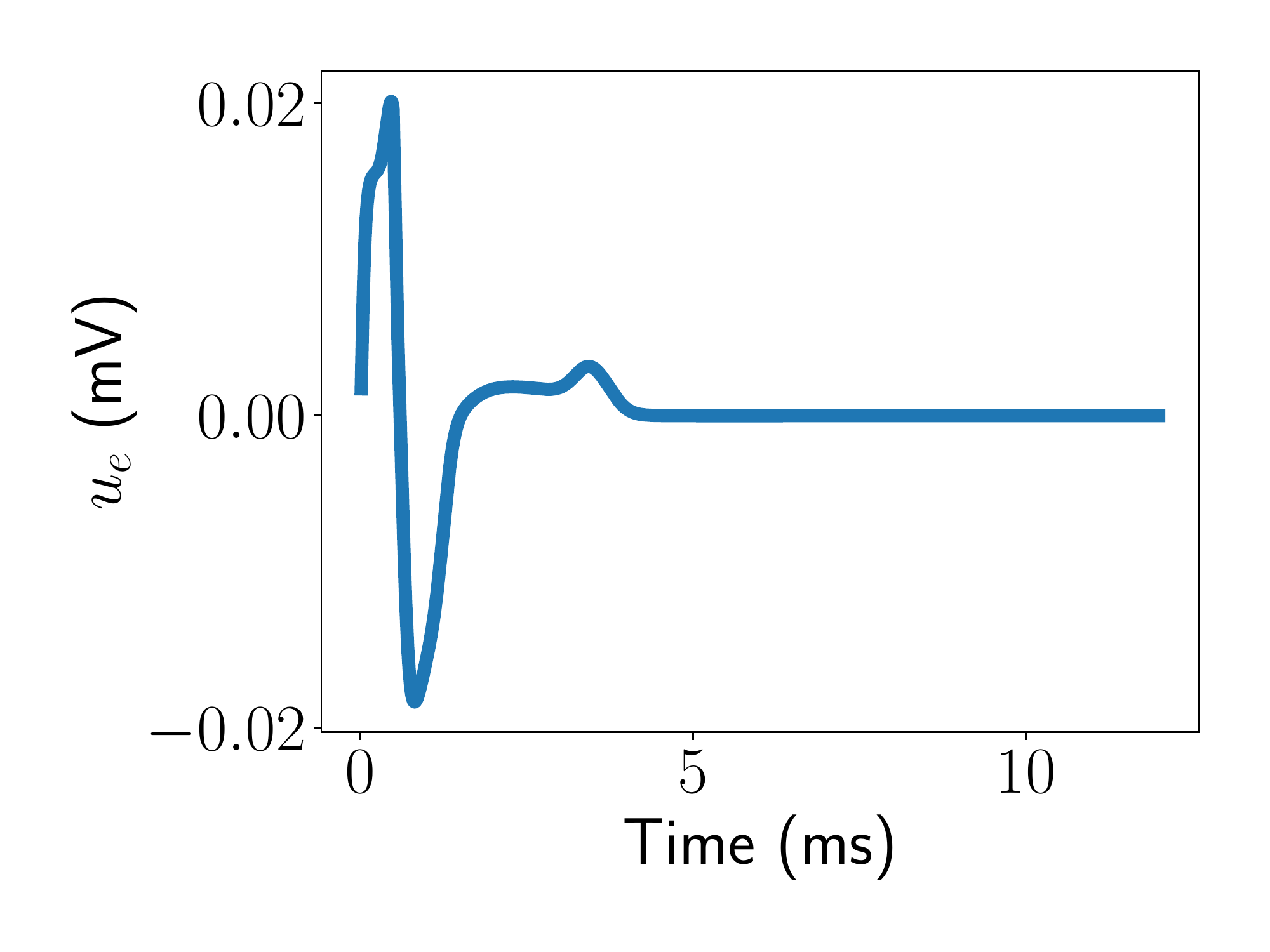}
        \caption{}
    \end{subfigure}
    \begin{subfigure}[t]{0.32\textwidth}
        \centering
        \includegraphics[width=\textwidth]{./figures/membraneconv.pdf}
        \caption{}
    \end{subfigure}
    \hfill
    \begin{subfigure}[t]{0.32\textwidth}
        \centering
        \includegraphics[width=\textwidth]{./figures/membraneconvzoom.pdf}
        \caption{}
    \end{subfigure}
    \hfill
    \begin{subfigure}[t]{0.32\textwidth}
        \centering
        \includegraphics[width=\textwidth]{./figures/extraconv.pdf}
        \caption{}
    \end{subfigure}

    \caption{ (a) The intracellular domain and its active mesh, showing stimulated parts of dendrites. (b--c) Simulated extracellular potential at
        \(t=0.5\si{ms}\) and \(t=1.1\si{ms}\).
        . (d) Membrane potential at $t = 0.0, 0.5, 1.1, 1.5)
            \si{ms}$. (d) .  (e--g) Traces of the
        membrane (e), intracellular (f) and extracellular (g) potential
        over time, evaluated at close points shown in (a). (h--j) Traces of the neuronal membrane potential (i)
        and (j, zoom) and extracellular potential (h) for different
        spatial resolutions.
    }
    \label{fig:neuron}
\end{figure}

To study the numerical convergence of the membrane and cellular
potential approximations on the unfitted geometry, we also run a
series of spatial refinements (\( 54\times73\times18\),
\(78\times106\times26\), and \( 108\times146\times36\)) on the time interval \([0,1.5 \si{ms}]\)), with
\(\Delta t = 0.01 \si{ms}\). Comparing the membrane and extracellular potentials from the same points as before (\cref{fig:neuron}),  we observe that the solutions seem to converge.

\section{Conclusions and outlook}
\label{sec:conclusion}
We proposed a new CutFEM framework to discretize the EMI problem
on geometrically resolved cell-by-cell models.
Extensive numerical studies showed that our framework gives the same 
convergence rates as expected from a standard fitted finite 
element discretization and that it is insensitive
to how the embedded cell geometry cuts the background mesh 
ensuring the geometrical robustness of our method.
The CutFEM framework for the EMI model circumvents the time-consuming generation
of high-quality unstructured 3D volume meshes required for accurate standard FEM based 
computations. 
To obtain a more holistic simulation pipeline, 
a next natural step would be to implement a user-friendly interface 
which allows to feed imaging data of the neuron cells directly
into the numerical solver.

In this work, we focused on two
CutFEM discretizations of the EMI PDE step both of which were based on a primal
formulation of the potential equations in the extra- and intracellular domains.
Based on the idea of \Hdiv-conforming formulation of the EMI problem as suggested
in, e.g.,~\cite{EMIchapter5}, it will be also interesting 
to explore the advantages and disadvantages of emerging 
\Hdiv-conforming CutFEM formulations~\cite{lehrenfeld2023analysis,frachonDivergencePreservingCut2023} in the context of geometrically resolved
cell-by-cell discretizations.

Solving the EMI model is computationally demanding due to the multiple scales
in space and time characteristic for each subsystem.
Moreover, the linear system arising from the decoupled PDE 
requires solving a large linear system.
To be able to apply the CutFEM framework to large-scale problems
and realistic representation of neural cell networks,
we need to look into the possibilities for parallelization. 
This will require the design and implementation of efficient and parallelizable
mesh intersecting algorithms, and many of the arising challenges 
have been addressed in the work~\cite{KrauseZulian2016}.
Looking into preconditioning to improve
robustness and scalability would also be beneficial. 
Since the problem is coupled across dimensions, this requires tailored methods, 
and for the fitted EMI problem the design of algebraic multigrid methods 
has been considered in the very recent preprint~\cite{budisaAlgebraicMultigridMethods2023}. 
Moreover, to prevent unphysical communication between dendrites through the extracellular space caused by a too low resolution in the background mesh,
we also suggest combining our approach with adaptive mesh refinement based on octrees~\cite{BursteddeWilcoxGhattas2011a}. 
Finally, it might be worthwhile to consider
extending the first-order Godunov splitting scheme to a second-order Strang splitting
to improve computationally efficiency.

\section*{Acknowledgments}
We graciously acknowledge useful discussions and support from Miroslav Kuchta in connection with the neuronal geometry.

\bibliographystyle{siamplain}
\bibliography{reference}

\begin{thebibliography}{10}

\bibitem{agudelo-toroComputationallyEfficientSimulation2013}
{\sc A.~{Agudelo-Toro} and A.~Neef}, {\em Computationally efficient simulation
  of electrical activity at cell membranes interacting with self-generated and
  externally imposed electric fields}, Journal of Neural Engineering, 10
  (2013), p.~026019, \url{https://doi.org/10.1088/1741-2560/10/2/026019}.

\bibitem{ascoliNeuroMorphoOrgCentral2007}
{\sc G.~A. Ascoli, D.~E. Donohue, and M.~Halavi}, {\em {{NeuroMorpho}}.{{Org}}:
  {{A Central Resource}} for {{Neuronal Morphologies}}}, The Journal of
  Neuroscience, 27 (2007), pp.~9247--9251,
  \url{https://doi.org/10.1523/JNEUROSCI.2055-07.2007}.

\bibitem{AtallahCanutoScovazzi2021}
{\sc N.~M. Atallah, C.~Canuto, and G.~Scovazzi}, {\em The shifted boundary
  method for solid mechanics}, International Journal for Numerical Methods in
  Engineering, 122 (2021), pp.~5935--5970, \url{https://doi.org/10/gk49t8},
  \url{https://onlinelibrary.wiley.com/doi/abs/10.1002/nme.6779} (accessed
  2023-05-19).

\bibitem{badiaGridapExtensibleFinite2020}
{\sc S.~Badia and F.~Verdugo}, {\em Gridap: {{An}} extensible {{Finite
  Element}} toolbox in {{Julia}}}, Journal of Open Source Software, 5 (2020),
  p.~2520, \url{https://doi.org/10.21105/joss.02520}.

\bibitem{BadiaVerdugoMartin2018}
{\sc S.~Badia, F.~Verdugo, and A.~F. Mart{\'i}n}, {\em The aggregated unfitted
  finite element method for elliptic problems}, Comput. Methods Appl. Mech.
  Engrg., 336 (2018), pp.~533--553,
  \url{https://doi.org/10.1016/j.cma.2018.03.022}.

\bibitem{BastianEngwer2009}
{\sc P.~Bastian and C.~Engwer}, {\em An unfitted finite element method using
  discontinuous {{Galerkin}}}, International Journal for Numerical Methods in
  Engineering, 79 (2009), pp.~1557--1576,
  \url{https://doi.org/10.1002/nme.2631}.

\bibitem{belgacem2015compositemedia}
{\sc F.~Ben~Belgacem, C.~Bernardi, F.~Jelassi, and M.~Mint~Brahim}, {\em Finite
  {{Element Methods}} for the {{Temperature}} in {{Composite Media}} with
  {{Contact Resistance}}}, Journal of Scientific Computing, 63 (2015),
  pp.~478--501, \url{https://doi.org/10.1007/s10915-014-9907-0}.

\bibitem{nanna_berre_2023_8068506}
{\sc N.~Berre}, {\em {Supplementary material (Code) for Cut finite element
  discretizations of cell-by-cell EMI electrophysiology models}}, June 2023,
  \url{https://doi.org/10.5281/zenodo.8068506},
  \url{https://doi.org/10.5281/zenodo.8068506}.

\bibitem{bezansonJuliaFreshApproach2017}
{\sc J.~Bezanson, A.~Edelman, S.~Karpinski, and V.~B. Shah}, {\em Julia: {{A
  Fresh Approach}} to {{Numerical Computing}}}, SIAM Review, 59 (2017),
  pp.~65--98, \url{https://doi.org/10.1137/141000671}.

\bibitem{braessStabilitySaddlePoint1996}
{\sc D.~Braess}, {\em Stability of saddle point problems with penalty}, ESAIM:
  Mathematical Modelling and Numerical Analysis, 30 (1996), pp.~731--742,
  \url{https://doi.org/10.1051/m2an/1996300607311}.

\bibitem{budisaAlgebraicMultigridMethods2023}
{\sc A.~Budisa, X.~Hu, M.~Kuchta, K.-A. Mardal, and L.~T. Zikatanov}, {\em
  Algebraic multigrid methods for metric-perturbed coupled problems}, May 2023,
  \url{https://arxiv.org/abs/2305.06073}.

\bibitem{Burman2013}
{\sc E.~Burman}, {\em Projection stabilization of {{Lagrange}} multipliers for
  the imposition of constraints on interfaces and boundaries}, Numerical
  Methods for Partial Differential Equations, 30 (2014), pp.~567--592,
  \url{https://doi.org/10.1002/num.21829}.

\bibitem{burmanCutFEMDiscretizingGeometry2015}
{\sc E.~Burman, S.~Claus, P.~Hansbo, M.~G. Larson, and A.~Massing}, {\em
  {{CutFEM}}: {{Discretizing}} geometry and partial differential equations},
  International Journal for Numerical Methods in Engineering, 104 (2015),
  pp.~472--501, \url{https://doi.org/10.1002/nme.4823}.

\bibitem{BurmanHansbo2009}
{\sc E.~Burman and P.~Hansbo}, {\em Interior-penalty-stabilized {{Lagrange}}
  multiplier methods for the finite-element solution of elliptic interface
  problems}, IMA J. Numer. Anal., 30 (2009), pp.~870--885,
  \url{https://doi.org/10.1093/imanum/drn081}.

\bibitem{burmanFictitiousDomainFinite2012}
{\sc E.~Burman and P.~Hansbo}, {\em Fictitious domain finite element methods
  using cut elements: {{II}}. {{A}} stabilized {{Nitsche}} method}, Applied
  Numerical Mathematics, 62 (2012), pp.~328--341,
  \url{https://doi.org/10.1016/j.apnum.2011.01.008}.

\bibitem{BurmanHansboLarsonEtAl2018}
{\sc E.~Burman, P.~Hansbo, M.~G. Larson, and A.~Massing}, {\em Cut finite
  element methods for partial differential equations on embedded manifolds of
  arbitrary codimensions}, ESAIM: Math. Model. Numer. Anal., 52 (2018),
  pp.~2247--2282, \url{https://doi.org/10/gh8jr6}.

\bibitem{burmanCutFiniteElement2018}
{\sc E.~Burman, P.~Hansbo, M.~G. Larson, and A.~Massing}, {\em Cut finite
  element methods for partial differential equations on embedded manifolds of
  arbitrary codimensions}, ESAIM: Mathematical Modelling and Numerical
  Analysis, 52 (2018), pp.~2247--2282,
  \url{https://doi.org/10.1051/m2an/2018038}.

\bibitem{BursteddeWilcoxGhattas2011a}
{\sc C.~Burstedde, L.~C. Wilcox, and O.~Ghattas}, {\em P4est: {{Scalable
  Algorithms}} for {{Parallel Adaptive Mesh Refinement}} on {{Forests}} of
  {{Octrees}}}, SIAM Journal on Scientific Computing, 33 (2011),
  pp.~1103--1133.

\bibitem{CALI2019}
{\sc C.~Cal{\`i}, M.~Agus, K.~Kare, D.~J. Boges, H.~Lehv{\"a}slaiho,
  M.~Hadwiger, and P.~J. Magistretti}, {\em {{3D}} cellular reconstruction of
  cortical glia and parenchymal morphometric analysis from {{Serial Block-Face
  Electron Microscopy}} of juvenile rat}, Progress in Neurobiology, 183 (2019),
  p.~101696, \url{https://doi.org/10.1016/j.pneurobio.2019.101696}.

\bibitem{ClausKerfridenMoshfeghifarEtAl2021}
{\sc S.~Claus, P.~Kerfriden, F.~Moshfeghifar, S.~Darkner, K.~Erleben, and
  C.~Wong}, {\em Contact modeling from images using cut finite element
  solvers}, Advanced Modeling and Simulation in Engineering Sciences, 8 (2021),
  p.~13, \url{https://doi.org/10/gkf8x8},
  \url{https://doi.org/10.1186/s40323-021-00197-2} (accessed 2023-05-19).

\bibitem{VerhooselLarsonBadiaEtAl2023}
{\sc F.~de~Prenter, C.~V. Verhoosel, E.~H. van Brummelen, M.~G. Larson, and
  S.~Badia}, {\em Stability and {{Conditioning}} of {{Immersed Finite Element
  Methods}}: {{Analysis}} and {{Remedies}}}, Arch Computat Methods Eng,
  (2023), \url{https://doi.org/10/gr8q4w},
  \url{https://doi.org/10.1007/s11831-023-09913-0} (accessed 2023-05-18).

\bibitem{de2023boundary}
{\sc G.~R. de~Souza, R.~Krause, and S.~Pezzuto}, {\em Boundary integral
  formulation of the cell-by-cell model of cardiac electrophysiology}, arXiv
  preprint arXiv:2302.05281,  (2023).

\bibitem{ellingsrud2020finite}
{\sc A.~J. Ellingsrud, A.~Solbr{\aa}, G.~T. Einevoll, G.~Halnes, and M.~E.
  Rognes}, {\em Finite element simulation of ionic electrodiffusion in cellular
  geometries}, Frontiers in Neuroinformatics, 14 (2020), p.~11,
  \url{https://doi.org/10.3389/fninf.2020.00011}.

\bibitem{ErdbrueggerWesthoffHoeltershinkenEtAl2022}
{\sc T.~Erdbr{\"u}gger, A.~Westhoff, M.~Hoeltershinken, J.-O. Radecke,
  Y.~Buschermoehle, A.~Buyx, F.~Wallois, S.~Pursiainen, J.~Gross, R.~Lencer,
  C.~Engwer, and C.~Wolters}, {\em {{CutFEM Forward Modeling for EEG Source
  Analysis}}}, Tech. Report arXiv:2211.17093, Nov. 2022,
  \url{https://doi.org/10.48550/arXiv.2211.17093},
  \url{https://arxiv.org/abs/2211.17093}.

\bibitem{Farina2022}
{\sc S.~Farina}, {\em Modelling astrocytic metabolism in actual cell
  morphologies}, 2022.

\bibitem{farina2021cut}
{\sc S.~Farina, S.~Claus, J.~S. Hale, A.~Skupin, and S.~P. Bordas}, {\em A cut
  finite element method for spatially resolved energy metabolism models in
  complex neuro-cell morphologies with minimal remeshing}, Advanced Modeling
  and Simulation in Engineering Sciences, 8 (2021), pp.~1--32,
  \url{https://doi.org/10.1186/s40323-021-00191-8}.

\bibitem{frachonDivergencePreservingCut2023}
{\sc T.~Frachon, P.~Hansbo, E.~Nilsson, and S.~Zahedi}, {\em A divergence
  preserving cut finite element method for {{Darcy}} flow}, Apr. 2023,
  \url{https://arxiv.org/abs/2205.12023}.

\bibitem{gurkanStabilizedCutDiscontinuous2019}
{\sc C.~G{\"u}rkan and A.~Massing}, {\em A stabilized cut discontinuous
  {{Galerkin}} framework for elliptic boundary value and interface problems},
  Computer Methods in Applied Mechanics and Engineering, 348 (2019),
  pp.~466--499, \url{https://doi.org/10.1016/j.cma.2018.12.041}.

\bibitem{Hodgkin1952}
{\sc A.~L. Hodgkin and A.~F. Huxley}, {\em A quantitative description of
  membrane current and its application to conduction and excitation in nerve},
  The Journal of Physiology, 117 (1952), pp.~500--544,
  \url{https://doi.org/10.1113/jphysiol.1952.sp004764}.

\bibitem{jaegerPropertiesCardiacConduction2019}
{\sc K.~H. J{\ae}ger, A.~G. Edwards, A.~McCulloch, and A.~Tveito}, {\em
  Properties of cardiac conduction in a cell-based computational model}, PLOS
  Computational Biology, 15 (2019), pp.~1--35,
  \url{https://doi.org/10.1371/journal.pcbi.1007042}.

\bibitem{jaegerEfficientNumericalSolution2021}
{\sc K.~H. J{\ae}ger, K.~G. Hustad, X.~Cai, and A.~Tveito}, {\em Efficient
  {{Numerical Solution}} of the {{EMI Model Representing}} the {{Extracellular
  Space}} ({{E}}), {{Cell Membrane}} ({{M}}) and {{Intracellular Space}}
  ({{I}}) of a {{Collection}} of {{Cardiac Cells}}}, Frontiers in Physics, 8
  (2021), \url{https://doi.org/10.3389/fphy.2020.579461}.

\bibitem{EMIchapter1}
{\sc K.~H. J{\ae}ger and A.~Tveito}, {\em Derivation of a cell-based
  mathematical model of excitable cells}, in Modeling Excitable Tissue: {{The
  EMI}} Framework, A.~Tveito, K.-A. Mardal, and M.~E. Rognes, eds., vol.~7,
  {Springer International Publishing}, {Cham}, Oct. 2021, ch.~1, pp.~1--13,
  \url{https://doi.org/10.1007/978-3-030-61157-6_1}.

\bibitem{jiUnfittedFiniteElement2017}
{\sc H.~Ji, F.~Wang, and J.~Chen}, {\em Unfitted finite element methods for the
  heat conduction in composite media with contact resistance: {{Unfitted Method
  For Heat Conduction With Interface}}}, Numerical Methods for Partial
  Differential Equations, 33 (2017), pp.~354--380,
  \url{https://doi.org/10.1002/num.22111}.

\bibitem{jongbloetsStagespecificFunctionsSemaphorin7A2017}
{\sc B.~C. Jongbloets, S.~Lemstra, R.~Schellino, M.~H. Broekhoven, J.~Parkash,
  A.~J. C. G.~M. Hellemons, T.~Mao, P.~Giacobini, H.~{van Praag},
  S.~De~Marchis, G.~M.~J. Ramakers, and R.~J. Pasterkamp}, {\em Stage-specific
  functions of {{Semaphorin7A}} during adult hippocampal neurogenesis rely on
  distinct receptors}, Nature Communications, 8 (2017), p.~14666,
  \url{https://doi.org/10.1038/ncomms14666}.

\bibitem{KrauseZulian2016}
{\sc R.~Krause and P.~Zulian}, {\em A parallel approach to the variational
  transfer of discrete fields between arbitrarily distributed unstructured
  finite element meshes}, Siam Journal On Scientific Computing, 38 (2016),
  pp.~C307--C333, \url{https://doi.org/10.1137/15M1008361}.

\bibitem{EMIchapter5}
{\sc M.~Kuchta, K.-A. Mardal, and M.~E. Rognes}, {\em Solving the {{EMI}}
  equations using finite element methods}, in Modeling Excitable Tissue: {{The
  EMI}} Framework, A.~Tveito, K.-A. Mardal, and M.~E. Rognes, eds., {Springer
  International Publishing}, {Cham}, Oct. 2021, pp.~56--69,
  \url{https://doi.org/10.1007/978-3-030-61157-6_5}.

\bibitem{lamichhaneMortarFiniteElements2004}
{\sc B.~P. Lamichhane and B.~I. Wohlmuth}, {\em Mortar {{Finite Elements}} for
  {{Interface Problems}}}, Computing, 72 (2004),
  \url{https://doi.org/10.1007/s00607-003-0062-y}.

\bibitem{lehrenfeld2023analysis}
{\sc C.~Lehrenfeld, T.~van Beeck, and I.~Voulis}, {\em Analysis of
  divergence-preserving unfitted finite element methods for the mixed poisson
  problem}, 2023, \url{https://arxiv.org/abs/2306.12722}.

\bibitem{levequeFiniteVolumeMethods2002}
{\sc R.~J. LeVeque}, {\em Finite {{Volume Methods}} for {{Hyperbolic
  Problems}}}, Cambridge {{Texts}} in {{Applied Mathematics}}, {Cambridge
  University Press}, {Cambridge}, 2002,
  \url{https://doi.org/10.1017/CBO9780511791253}.

\bibitem{liHighOrderInterfacepenalty2022}
{\sc X.~Li, X.~Zhang, and X.~Zhou}, {\em High order interface-penalty finite
  element methods for elliptic interface problems with {{Robin}} jump
  conditions}, Computer Methods in Applied Mechanics and Engineering, 390
  (2022), p.~114505, \url{https://doi.org/10.1016/j.cma.2021.114505}.

\bibitem{STLcuttersgithub}
{\sc P.~Martorell, S.~Badia, and F.~Verdugo}, {\em {STLCutters}}.
\newblock Version 1.6., GitHub, 2023, Available at
  https://github.com/gridap/STLCutters.jl/.

\bibitem{MassingLarsonLoggEtAl2013a}
{\sc A.~Massing, M.~G. Larson, A.~Logg, and M.~E. Rognes}, {\em A stabilized
  {{Nitsche}} fictitious domain method for the {{Stokes}} problem}, 61,
  pp.~604--628, \url{https://doi.org/10.1007/s10915-014-9838-9}.

\bibitem{mori2009numerical}
{\sc Y.~Mori and C.~Peskin}, {\em A numerical method for cellular
  electrophysiology based on the electrodiffusion equations with internal
  boundary conditions at membranes}, Communications in Applied Mathematics and
  Computational Science, 4 (2009), pp.~85--134,
  \url{https://doi.org/10.2140/camcos.2009.4.85}.

\bibitem{morschelGeneratingNeuronGeometries2017}
{\sc K.~M{\"o}rschel, M.~Breit, and G.~Queisser}, {\em Generating {{Neuron
  Geometries}} for {{Detailed Three-Dimensional Simulations Using AnaMorph}}},
  Neuroinformatics, 15 (2017), pp.~247--269,
  \url{https://doi.org/10.1007/s12021-017-9329-x}.

\bibitem{NuessingWoltersBrinckEtAl2016}
{\sc A.~N{\"u}{\ss}ing, C.~H. Wolters, H.~Brinck, and C.~Engwer}, {\em The
  {{Unfitted Discontinuous Galerkin Method}} for {{Solving}} the {{EEG Forward
  Problem}}}, IEEE Transactions on Biomedical Engineering, 63 (2016),
  pp.~2564--2575, \url{https://doi.org/10/f9k7cf}.

\bibitem{RankRuessKollmannsbergerEtAl2012}
{\sc E.~Rank, M.~Ruess, S.~Kollmannsberger, D.~Schillinger, and A.~D{\"u}ster},
  {\em Geometric modeling, isogeometric analysis and the finite cell method},
  Comput. Methods Appl. Mech. Engrg., 249 (2012), pp.~104--115,
  \url{https://doi.org/10.1016/j.cma.2012.05.022}.

\bibitem{SchillingerDedeScottEtAl2012}
{\sc D.~Schillinger, L.~Dede, M.~A. Scott, J.~A. Evans, M.~J. Borden, E.~Rank,
  and T.~J. Hughes}, {\em An isogeometric design-through-analysis methodology
  based on adaptive hierarchical refinement of {{NURBS}}, immersed boundary
  methods, and {{T-spline CAD}} surfaces}, Comput. Methods Appl. Mech. Eng.,
  249 (2012), pp.~116--150, \url{https://doi.org/10.1016/j.cma.2012.03.017}.

\bibitem{SchillingerRuess2014}
{\sc D.~Schillinger and M.~Ruess}, {\em The finite cell method: {{A}} review in
  the context of higher-order structural analysis of cad and image-based
  geometric models}, Arch. Comput. Methods Eng.,  (2014), pp.~1--65,
  \url{https://doi.org/10.1007/s11831-014-9115-y}.

\bibitem{stinner2020mathematical}
{\sc B.~Stinner and T.~Bretschneider}, {\em Mathematical modelling in cell
  migration: tackling biochemistry in changing geometries}, Biochemical Society
  Transactions, 48 (2020), pp.~419--428,
  \url{https://doi.org/10.1042/BST20190311}.

\bibitem{sundnes2006computational}
{\sc J.~Sundnes, B.~F. Nielsen, K.~A. Mardal, X.~Cai, G.~T. Lines, and
  A.~Tveito}, {\em On the computational complexity of the bidomain and the
  monodomain models of electrophysiology}, Annals of biomedical engineering, 34
  (2006), pp.~1088--1097, \url{https://doi.org/10.1007/s10439-006-9082-z}.

\bibitem{tveito2017cell}
{\sc A.~Tveito, K.~H. J{\ae}ger, M.~Kuchta, K.-A. Mardal, and M.~E. Rognes},
  {\em A cell-based framework for numerical modeling of electrical conduction
  in cardiac tissue}, Frontiers in Physics, 5 (2017), p.~48.

\bibitem{tveitoEvaluationAccuracyClassical2017}
{\sc A.~Tveito, K.~H. J{\ae}ger, G.~T. Lines, {\L}.~Paszkowski, J.~Sundnes,
  A.~G. Edwards, T.~{M{\=a}ki-Marttunen}, G.~Halnes, and G.~T. Einevoll}, {\em
  An {{Evaluation}} of the {{Accuracy}} of {{Classical Models}} for
  {{Computing}} the {{Membrane Potential}} and {{Extracellular Potential}} for
  {{Neurons}}}, Frontiers in Computational Neuroscience, 11 (2017), p.~27,
  \url{https://doi.org/10.3389/fncom.2017.00027}.

\bibitem{Verdugo2022}
{\sc F.~Verdugo and S.~Badia}, {\em The software design of gridap: A finite
  element package based on the julia {JIT} compiler}, Computer Physics
  Communications, 276 (2022), p.~108341,
  \url{https://doi.org/10.1016/j.cpc.2022.108341},
  \url{https://doi.org/10.1016/j.cpc.2022.108341}.

\bibitem{GridapEmbeddedgithub}
{\sc F.~Verdugo, E.~Neiva, and S.~Badia}, {\em {GridapEmbedded}}.
\newblock Version 0.8., GitHub, 2023, Available at
  https://github.com/gridap/GridapEmbedded.jl.

\bibitem{VerhooselvanZwietenvanRietbergenEtAl2015}
{\sc C.~V. Verhoosel, G.~J. {van Zwieten}, B.~{van Rietbergen}, and R.~{de
  Borst}}, {\em {Image-Based Goal-Oriented Adaptive Isogeometric Analysis with
  Application to the Micro-Mechanical Modeling of Trabecular Bone}}, Comput.
  Methods Appl. Mech. Eng., 284 (2015), pp.~138--164,
  \url{https://doi.org/10/f63fv9}.

\end{thebibliography}

\end{document}